\let\pa\partial
\let\na\nabla
\let\eps\varepsilon
\newcommand{\N}{{\mathbb N}}
\newcommand{\R}{{\mathbb R}}
\newcommand{\diver}{\operatorname{div}}
\newtheorem{theorem}{Theorem}
\newtheorem{lemma}[theorem]{Lemma}
\begin{document}

\title[A degenerate diffusion system]{Existence analysis of a degenerate 
diffusion system for heat-conducting fluids}

\author[G. Favre]{Gianluca Favre}
\address{Fakult\"at f\"ur Mathematik, Universit\"at Wien, Oskar-Morgenstern-Platz 1,
1090 Wien, Austria}
\email{gianluca.favre@univie.ac.at}

\author[A. J\"ungel]{Ansgar J\"ungel}
\address{Institute for Analysis and Scientific Computing, Vienna University of
	Technology, Wiedner Hauptstra\ss e 8--10, 1040 Wien, Austria}
\email{juengel@tuwien.ac.at}

\author[C. Schmeiser]{Christian Schmeiser}
\address{Fakult\"at f\"ur Mathematik, Universit\"at Wien, Oskar-Morgenstern-Platz 1,
1090 Wien, Austria}
\email{christian.schmeiser@univie.ac.at}

\author[N. Zamponi]{Nicola Zamponi}
\address{University of Mannheim, School of Business Informatics and Mathematics,
B6, 28, 68159 Mannheim, Germany}
\email{nzamponi@mail.uni-mannheim.de}

\date{\today}

\thanks{The first three authors acknowledge partial support from the FWF,
the Austrian Science Fund (FWF), grants F65 and W1245. 
The second author has been additionally supported by the grants P30000 and P33010
of the FWF. The fourth author acknowledges support from the Alexander von 
Humboldt Foundation}

\begin{abstract}
The existence of global weak solutions to a parabolic energy-transport 
system in a bounded domain with no-flux boundary conditions is proved. The model
can be derived in the diffusion limit from a kinetic equation 
with a linear collision operator involving
a non-isothermal Maxwellian. The evolution of the local temperature is governed by 
a heat equation with a source term that depends on the energy of the distribution 
function. The limiting model consists of cross-diffusion equations with an entropy
structure. The main difficulty is the nonstandard degeneracy, i.e., ellipticity 
is lost when the fluid density or temperature vanishes. 
The existence proof is based on a priori estimates coming from 
the entropy inequality and the $H^{-1}$ method and on techniques
from mathematical fluid dynamics (renormalized formulation, div-curl lemma).
\end{abstract}

\keywords{Cross-diffusion, energy transport, weak solutions, renormalized equation,
compensated compactness.}

\subjclass[2000]{35K51, 35K65, 35Q79.}

\maketitle


\section{Introduction}

This paper is concerned with the global existence analysis of a degenerate
diffusion system governing the evolution of the particle density 
$\rho(x,t)$ and temperature $\theta(x,t)$:
\begin{equation}\label{1.eq}
  \pa_t\rho = \Delta(\rho\theta), \quad 
	\pa_t E	= \Delta\bigg(\theta + \frac{5}{2}\rho\theta^2\bigg)
	\quad\mbox{in }\Omega,\ t>0,
\end{equation}
where $E=\theta + \frac32\rho\theta$ is the energy density, supplemented by
no-flux boundary and initial conditions,
\begin{align}
  \na(\rho\theta)\cdot\nu=\na\bigg(\theta+\frac{5}{2}\rho\theta^2\bigg)\cdot\nu=0
	&\quad\mbox{on }\pa\Omega,\ t>0, \label{1.bc} \\ 
	\rho(0)=\rho^0,\ E(0)=E^0:=\theta^0+\frac32\rho^0\theta^0 &\quad\mbox{in }\Omega, \label{1.ic}
\end{align}
and $\Omega\subset\R^3$ is a bounded domain. The equations describe a rarefied gas
with thermalizing collisions. They can be formally derived from a collisional kinetic
equation, coupled to a heat equation for the background temperature 
governed by a Fourier law. We refer to Section \ref{sec.model} for modeling details.

A major difficulty of system \eqref{1.eq} is the
derivation of suitable a priori estimates.
This issue will be tackled by exploiting the entropy structure of the system.
This means that equations \eqref{1.eq} can be written in the 
cross-diffusion form
\begin{equation}\label{1.eq2}
  \pa_t \vec{u} = \diver(M\na \vec{q}),
\end{equation}
where 
$$
  \vec{u} = \begin{pmatrix} \rho \\ E \end{pmatrix}, \quad
	\vec{q} = \begin{pmatrix} \log(\rho/\theta^{3/2})+\frac52 \\ -1/\theta
	\end{pmatrix}, \quad
  M = \begin{pmatrix}
	\rho\theta & \tfrac{5}{2}\rho\theta^2 \\
	\tfrac{5}{2}\rho\theta^2 & \theta^2(1+\tfrac{35}{4}\rho\theta)
	\end{pmatrix}.
$$
The so-called Onsager matrix $M$ is symmetric and positive semidefinite. 
However, $M$ becomes indefinite
when $\rho=0$ or $\theta=0$, showing that \eqref{1.eq2} is of degenerate type. 
The Gibbs free energy
\begin{equation}\label{gibbs}
  G = \rho\theta\log\frac{\rho}{\theta^{3/2}} + \frac32\rho\theta
	- \theta(\log\theta-1),
\end{equation}
defines the 
\begin{itemize}
\item chemical potential $\mu=\pa G/\pa\rho=\theta(\log(\rho/\theta^{3/2})+\frac52)$, 
\item the (mathematical) entropy $h=\pa G/\pa\theta=\rho\log(\rho/\theta^{3/2})
-\log\theta$, and
\item the energy density $E=G-\theta\pa G/\pa\theta=(1+\frac32\rho)\theta$. 
\end{itemize}
We reveal the formal gradient-flow structure for \eqref{1.eq2} by
defining the thermo-chemical potential $\phi=\pa h/\pa\rho=\mu/\theta$ and the 
negative inverse temperature $\pa h/\pa E=-1/\theta$ (interpreting $h$ as a function
of $(\rho,E)$) such that
$$
  \pa_t(\rho,E)^T - \diver(M\na Dh) = 0,
$$
where $Dh$ is the vector with components $\pa h/\pa\rho$ and
$\pa h/\pa E$. Furthermore, the entropy $h$ is a Lyapunov functional along
solutions to \eqref{1.eq2}: 
$$
  \frac{d}{dt}\int_\Omega h dx 
	= \int_\Omega\bigg(\frac{\pa h}{\pa\rho}\pa_t\rho 
	+ \frac{\pa h}{\pa E}\pa_t E\bigg)dx
	= -\int_\Omega \na(Dh)^T M\na Dh dx \le 0,
$$
since $M$ is positive semidefinite. In particular, we obtain a priori estimates
for $\na(Dh)^T M$ $\times\na Dh$ in $L^1(\Omega)$ from which we conclude gradient
estimates for $\sqrt{\rho\theta}$ and $\log\theta$ in $L^2(\Omega)$ (see below).

Still, this approach is not sufficient. Indeed, because of the degeneracy at 
$\theta=0$, we cannot expect to achieve any control on the gradient of $\rho$,
and moreover, the bounds from the entropy estimate are not sufficient to conclude.
Our idea, detailed below, is to apply well-known tools from mathematical fluid 
dynamics like $H^{-1}$ estimates and compensated compactness. The originality
of this work consists in the combination of these tools and entropy methods,
which allows us to treat non-standard degeneracies.


\subsection{State of the art}

Equations \eqref{1.eq} belong to the class of energy-transport models which
have been investigated particularly in semiconductor theory \cite{Jue09}.
The first energy-transport model for semiconductors was presented by
Stratton \cite{Str62}. First existence results were concerned with models
with very particular diffusion coefficients (being not of the form \eqref{1.eq})
\cite{AlRo17,AlXi94} or with uniformly positive definite diffusion matrices 
\cite{DGJ97}. Existence results for
physically more realistic diffusion coefficients were shown in \cite{ChHs03},
but only for situations close to equilibrium. A degenerate energy-transport
system with a simplified temperature equation was analyzed in \cite{JPR13}.
Energy-transport models do not only appear in semiconductor theory. For instance,
they have been used to model self-gravitating particle clouds \cite{BiNa02} and
the dynamics in optical lattices \cite{BrJu18}.

In \cite{ZaJu15}, the global existence of weak solutions to the model
\begin{equation}\label{et}
  \pa_t\rho = \Delta(\rho\theta), \quad 
	\pa_t(\rho\theta) = \frac53\Delta(\rho\theta^2)
\end{equation}
in a bounded domain $\Omega$ with no-flux boundary conditions was proved. 
At first glance, equations \eqref{1.eq} look simpler than \eqref{et} because
of the additional diffusion in the energy equation. 
However, the ideas in \cite{ZaJu15} cannot be easily applied to \eqref{1.eq}. 
Indeed, the key idea in \cite{ZaJu15}
was to introduce the variables $u=\rho\theta$ and $v=\rho\theta^2$ and to
apply the Stampacchia trunction method to a time-discretized version of
\begin{equation}\label{1.uv}
  \pa_t\bigg(\frac{u^2}{v}\bigg) = \Delta u, \quad \pa_t u = \frac53\Delta v.
\end{equation}
The functionals $\int_\Omega\rho^2\theta^b dx$ turn out to be
Lyapunov functionals along solutions to \eqref{1.uv}
for suitable values of $b\in\R$, leading to uniform gradient estimates. 
However, the additional term in the energy equation of \eqref{1.eq}
complicates the derivation of a priori estimates.
Thus, the proof in \cite{ZaJu15} seems to be rather specific to 
system \eqref{et} and is not generalizable. Our idea is to treat \eqref{1.eq} 
by combining entropy methods and 
tools from mathematical fluid dynamics, which may be also applied to
other cross-diffusion systems. 


\subsection{Mathematical key ideas}

As explained before, the first key idea is to exploit, in contrast to \cite{ZaJu15}, 
the entropy structure of \eqref{1.eq}.
Indeed, recalling the mathematical entropy density 
\begin{equation}\label{1.h}
  h(\rho,\theta) = \rho\log\frac{\rho}{\theta^{3/2}}-\log\theta \quad\mbox{for }
	\rho,\,\theta>0,
\end{equation}
a formal computation (which is made rigorous for an approximate scheme; see
\eqref{ei2}) gives the entropy dissipation equation
$$
  \frac{d}{dt}\int_\Omega h(\rho,\theta)dx
	+ \int_\Omega\bigg(2\big|\na\sqrt{\rho\theta}\big|^2 
	+ |\na\log\theta|^2\bigg(1+\frac{5}{2}\rho\theta\bigg)\bigg)dx = 0 \,,
$$
which provides $H^1(\Omega)$ estimates for $\sqrt{\rho\theta}$ and $\log\theta$.
Moreover, this estimate implies that $\theta>0$ a.e.\ (but not $\rho>0$).

Clearly, the entropy estimates are not sufficient to pass to the de-regularization
limit in the approximate scheme. Further bounds
are derived from the $H^{-1}(\Omega)$ method, i.e., we use basically
$(-\Delta)^{-1}\rho$ and $(-\Delta)^{-1}E$, respectively, 
as test functions in the weak formulation
of \eqref{1.eq} (second key idea). This method gives estimates for 
$$
  \int_\Omega\rho^2\theta dx   \qquad\mbox{and}\qquad
	\int_\Omega\bigg(\theta+\frac{5}{2}\rho\theta^2\bigg)
	\bigg(\theta+\frac32\rho\theta\bigg)dx \,.
$$
Combining these bounds with those coming from the entropy inequality 
and the conservation laws leads to
estimates for $\na(\rho\theta)=\sqrt{\rho\theta}\na\sqrt{\rho\theta}$, 
$\na\theta=\theta\na\log\theta$ and consequently for $E$ in $W^{1,1}(\Omega)$.
Moreover, $\pa_t E$ is bounded in some dual Sobolev space. This allows us to
apply the Aubin--Lions lemma to $E$. Unfortunately, we do not obtain 
gradient estimates for $\rho$. 

To overcome this issue, we use tools from mathematical fluid dynamics
(third key idea).
Let $(\rho_\delta,\theta_\delta)$ be approximate solutions to \eqref{1.eq}
(in a sense made precise in Section \ref{sec.ex}). 
First, we write the mass balance equation in the renormalized form
$$
  \pa_t f(\rho_\delta) - \diver(f'(\rho_\delta)\na(\rho_\delta\theta_\delta))
	= -f''(\rho_\delta)\na\rho_\delta\cdot\na(\rho_\delta\theta_\delta)
$$
in the sense of distributions for smooth functions $f$ with bounded derivatives. 
Let $g$ another smooth function with bounded derivatives and introduce the
vectors
$$
  U_\delta = \big(f(\rho_\delta),-f'(\rho_\delta)
	\na(\rho_\delta\theta_\delta)\big), \quad
	V_\delta = \big(g(\theta_\delta),0,0,0\big).
$$
We deduce from the properties of $f$ and $g$ and the a priori estimates that
$\diver_{(t,x)}U_\delta$ and $\operatorname{curl}_{(t,x)}V_\delta$ are uniformly
bounded in $L^1(\Omega\times(0,T))$ and hence relatively compact in
$W^{-1,r}(\Omega)$ for some $r>1$. The div-curl lemma implies that
$\overline{U_\delta\cdot V_\delta}=\overline{U_\delta}\cdot\overline{V_\delta}$
a.e., where the bar denotes the weak limit of the corresponding sequence.
Thus, $\overline{f(\rho_\delta)g(\theta_\delta)}=\overline{f(\rho_\delta)}\;
\overline{g(\theta_\delta)}$ a.e. A truncation procedure yields that
$\overline{\rho_\delta\theta_\delta}=\rho\theta$, where $\rho$ and $\theta$
are the weak limits of $(\rho_\delta)$ and $(\theta_\delta)$, respectively.
As $(E_\delta)$ converges strongly, by the Aubin--Lions lemma,
we are able to prove that $\theta_\delta\to\theta$ and
eventually $\rho_\delta\to\rho$ a.e. These limits allow us to identify the
weak limits and to pass to the limit $\delta\to 0$ in the approximate equations.
The approximate scheme contains additional terms which need to be treated carefully
such that our arguments are more technical than presented here. In fact,
we need three approximation levels; see Section \ref{sec.ex} for details.


\subsection{Main result}

Our main result is as follows:

\begin{theorem}[Existence of weak solutions]\label{thm.ex}
Let $\Omega\subset\R^3$ be a bounded domain with $\pa\Omega\in C^{1,1}$.
Let $\rho^0$, $\theta^0\in L^1(\Omega)$ satisfy $\rho^0\ge 0$, $\theta^0\ge 0$ in 
$\Omega$ and $\rho^0\theta^0$, $h(\rho^0,\theta^0)\in L^1(\Omega)$, where
$h$ is defined in \eqref{1.h}. Let $T>0$ and $\Omega_T=\Omega\times(0,T)$.
Then there exist $\rho$, $\theta\in L^\infty(0,T;L^1(\Omega))$ such that
\begin{align*}
  & \rho\log\rho\in L^\infty(0,T; L^1(\Omega)),
	\quad E = \theta + \frac32 \rho\theta \in 
	L^\infty(0,T; L^1(\Omega))\cap L^2(\Omega_T), \\
  & \sqrt{\rho\theta},\,\log\theta\in L^2(0,T; H^1(\Omega)),
  \quad \rho\theta^2\in L^{3/2}(\Omega_T), \\
	& \pa_t\rho\in L^{4/3}(0,T;W^{1,4}(\Omega)'), \quad
	\pa_t E\in L^{6/5}(0,T;W^{2,4}(\Omega)');
\end{align*}
it holds that $\rho\ge 0$ and $\theta>0$ a.e.\ in $\Omega_T$; 
$(\rho,\theta)$ is a weak solution to \eqref{1.eq}--\eqref{1.ic} in the sense
\begin{align}
  \int_0^T\langle\pa_t\rho,\psi_1\rangle dt
  + \frac32\int_0^T\int_\Omega\na(\rho\theta)\cdot\na\psi_1 dxdt &= 0, \label{weak.1} \\
  \int_0^T\langle\pa_t E,\psi_2\rangle dt
  -\int_0^T\int_\Omega\bigg(\theta+\frac{5}{2}\rho\theta^2\bigg)
  \Delta\psi_2 dxdt &=0 ,\label{weak.2} 
\end{align}
for any test functions $\psi_1\in L^4(0,T; W^{1,4}(\Omega))$,
$\psi_2\in L^{6}(0,T; W^{2,4}(\Omega))$; and the initial data \eqref{1.ic}
is satisfied in the sense of $W^{1,4}(\Omega)'$ and $W^{2,4}(\Omega)'$,
respectively. Moreover, the total mass and energy are preserved:
$$
  \int_\Omega\rho(t)dx = \int_\Omega\rho^0 dx, \quad
	\int_\Omega E(t)dx = \int_\Omega E^0 dx\quad\mbox{for }t\ge 0.
$$
\end{theorem}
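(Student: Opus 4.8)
The plan is to construct a three-level approximation scheme, derive $\delta$-uniform estimates from the entropy and $H^{-1}$ methods, and then pass to the limit using compensated compactness. First I would regularize: at the innermost level add a time discretization (implicit Euler with step $\tau$) together with an artificial higher-order elliptic regularization (e.g. $\eps(\Delta^2 - \mathrm{Id})$ applied to suitable variables, or $\eps$-diffusion in the $\vec q$-variables) so that the approximate system becomes uniformly parabolic and can be solved by a fixed-point argument (Leray--Schauder) on the regularized entropy variables $\vec q=(\phi,-1/\theta)$; working in these variables automatically enforces $\theta>0$ and $\rho\ge 0$ and yields the discrete entropy inequality by using $\vec q$ itself as a test function. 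A second parameter $\lambda$ may be needed to bound $\rho$ away from zero or to control the logarithm. Passing $\tau\to 0$ (with $\eps,\lambda$ fixed) via the standard piecewise-constant-in-time interpolants and the discrete entropy dissipation gives a weak solution of the $\eps$-regularized PDE; then $\eps\to 0$ (and $\lambda\to0$) is the substantive limit, renamed $\delta\to0$ in the excerpt's notation.

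Next I would collect the $\delta$-uniform a priori bounds. From the entropy dissipation equation stated in the excerpt I get $\sqrt{\rho_\delta\theta_\delta}$ and $\log\theta_\delta$ bounded in $L^2(0,T;H^1(\Omega))$, plus $\int_\Omega h\,dx$ bounded, hence $\rho_\delta\log\rho_\delta$ in $L^\infty(0,T;L^1)$. Conservation of mass and energy give $\rho_\delta$, $E_\delta=\theta_\delta+\tfrac32\rho_\delta\theta_\delta$ bounded in $L^\infty(0,T;L^1)$. The $H^{-1}$ method — testing the $\rho$-equation with $(-\Delta)^{-1}\rho_\delta$ and the $E$-equation with $(-\Delta)^{-1}E_\delta$ — produces bounds on $\int_{\Omega_T}\rho_\delta^2\theta_\delta$ and $\int_{\Omega_T}(\theta_\delta+\tfrac52\rho_\delta\theta_\delta^2)(\theta_\delta+\tfrac32\rho_\delta\theta_\delta)$; interpolating these with the entropy bounds yields $\rho_\delta\theta_\delta^2\in L^{3/2}(\Omega_T)$, $E_\delta\in L^2(\Omega_T)$, and $\nabla(\rho_\delta\theta_\delta)=2\sqrt{\rho_\delta\theta_\delta}\,\nabla\sqrt{\rho_\delta\theta_\delta}$, $\nabla\theta_\delta=\theta_\delta\nabla\log\theta_\delta$, hence $E_\delta\in L^q(0,T;W^{1,1}(\Omega))$ for some $q>1$. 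From the equations, $\pa_t\rho_\delta$ is bounded in $L^{4/3}(0,T;W^{1,4}(\Omega)')$ and $\pa_t E_\delta$ in $L^{6/5}(0,T;W^{2,4}(\Omega)')$. Aubin--Lions then gives $E_\delta\to E$ strongly in $L^1(\Omega_T)$ (and a.e., up to a subsequence), while $\rho_\delta\rightharpoonup\rho$, $\theta_\delta\rightharpoonup\theta$ only weakly and $\rho_\delta\theta_\delta\rightharpoonup\overline{\rho\theta}$ with the product a priori unidentified.

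The heart of the proof is identifying $\overline{\rho\theta}=\rho\theta$ and upgrading to a.e.\ convergence. Following the third key idea, I write the mass balance in renormalized form $\pa_t f(\rho_\delta)-\diver(f'(\rho_\delta)\nabla(\rho_\delta\theta_\delta))=-f''(\rho_\delta)\nabla\rho_\delta\cdot\nabla(\rho_\delta\theta_\delta)$ for $f\in C^2$ with $f',f''$ bounded — noting that $\nabla\rho_\delta\cdot\nabla(\rho_\delta\theta_\delta)$, although $\nabla\rho_\delta$ alone is not controlled, appears only through $f''(\rho_\delta)$ times a term that can be rewritten/bounded using $\nabla\sqrt{\rho_\delta\theta_\delta}$ and $\nabla\log\theta_\delta$ (this algebraic rewriting is the technical crux and must be done with care on the approximate level where extra $\eps$-terms are present). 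Setting $U_\delta=(f(\rho_\delta),-f'(\rho_\delta)\nabla(\rho_\delta\theta_\delta))$ and $V_\delta=(g(\theta_\delta),0,0,0)$, the space-time divergence of $U_\delta$ and curl of $V_\delta$ are bounded in $L^1(\Omega_T)$, hence precompact in $W^{-1,r}_{\mathrm{loc}}$ for some $r>1$ (Murat's lemma / $L^1$-into-$W^{-1,r}$ compactness in dimension $4$), so the div--curl lemma yields $\overline{f(\rho_\delta)g(\theta_\delta)}=\overline{f(\rho_\delta)}\;\overline{g(\theta_\delta)}$ a.e. Choosing $f,g$ to approximate the identity (truncation argument, using the $L^2$-bound on $\rho_\delta\theta_\delta^2$ and $L\log L$ bound on $\rho_\delta$ to control tails) gives $\overline{\rho_\delta\theta_\delta}=\rho\,\overline{\theta}=\rho\theta$ up to identifying $\overline\theta=\theta$, which follows since $\theta_\delta=E_\delta-\tfrac32\rho_\delta\theta_\delta$ and both pieces now converge. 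Strong $L^1$-convergence of $E_\delta$ together with $\overline{\rho_\delta\theta_\delta}=\rho\theta$ then forces $\theta_\delta\to\theta$ a.e., and from $\rho_\delta\theta_\delta\to\rho\theta$ a.e.\ with $\theta>0$ a.e.\ we get $\rho_\delta\to\rho$ a.e. With a.e.\ convergence in hand, the $L^{3/2}$-bound on $\rho_\delta\theta_\delta^2$ and Vitali's theorem let me pass to the limit in $\theta_\delta+\tfrac52\rho_\delta\theta_\delta^2$ in $L^1(\Omega_T)$ and in $\nabla(\rho_\delta\theta_\delta)$ weakly in $L^1$, so the weak formulations \eqref{weak.1}--\eqref{weak.2} hold in the limit; the conservation laws survive because mass and energy are conserved at every approximation level, and the initial data are attained in $W^{1,4}(\Omega)'$, $W^{2,4}(\Omega)'$ via the $\pa_t$-bounds. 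I expect the main obstacle to be making the renormalized identity and the div--curl argument rigorous \emph{on the approximate level}, i.e.\ controlling the commutator term $f''(\rho_\delta)\nabla\rho_\delta\cdot\nabla(\rho_\delta\theta_\delta)$ and the extra $\eps$-regularization contributions in $L^1$ uniformly in $\delta$, since $\nabla\rho_\delta$ is genuinely not bounded and only the degenerate combinations $\nabla\sqrt{\rho_\delta\theta_\delta}$, $\nabla\log\theta_\delta$ are.
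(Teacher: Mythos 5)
Your overall architecture matches the paper's: three-level approximation (implicit Euler, higher-order $\eps$-regularization solved by Lax--Milgram plus Leray--Schauder in the entropy variables, and a surviving $\delta$-regularization), entropy and $H^{-1}$ estimates, renormalized mass equation, div--curl lemma with truncations to get $\overline{\rho_\delta\theta_\delta}=\rho\,\overline{\theta_\delta}$, Aubin--Lions for $E_\delta$, and finally pointwise convergence of $\theta_\delta$ and $\rho_\delta$. The order of the limits is permuted ($\tau$ before $\eps$ rather than $\eps$, $\tau$, $\delta$ as in the paper), which is a cosmetic difference.

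There is, however, one genuine gap at the crux of Step 4. You assert that ``strong $L^1$-convergence of $E_\delta$ together with $\overline{\rho_\delta\theta_\delta}=\rho\theta$ forces $\theta_\delta\to\theta$ a.e.'' This implication is not valid as stated: those two facts only determine the \emph{weak} limit of $\theta_\delta$ (namely $\overline{\theta_\delta}=E-\tfrac32\rho\theta$) and say nothing about its oscillations, since $\theta_\delta\mapsto\theta_\delta$ is linear and weak limits commute with linear operations. You then compound this by writing ``from $\rho_\delta\theta_\delta\to\rho\theta$ a.e.'', but the div--curl/truncation argument only identifies the weak limit of the product, not an a.e.\ limit. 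The missing ingredient is a nonlinear test of the oscillations: one must apply the factorization identity $\overline{\rho_\delta g(\theta_\delta)}=\rho\,\overline{g(\theta_\delta)}$ to a bounded, strictly monotone, strictly convex $g$ --- the paper uses $g(s)=s(1+s)^{-1}$ and $(1+s)^{-1}$ --- combine it with the strong convergence of $E_\delta$ to obtain $\overline{\theta_\delta(1+\theta_\delta)^{-1}}=\theta\,\overline{(1+\theta_\delta)^{-1}}$, and then invoke the standard weak-convergence/monotonicity theorems (e.g.\ Theorems 10.19--10.20 in Feireisl--Novotn\'y) to conclude first $\overline{(1+\theta_\delta)^{-1}}=(1+\theta)^{-1}$ and then $\theta_\delta\to\theta$ a.e. Only after that does $\rho_\delta=\tfrac23(E_\delta/\theta_\delta-1)\to\rho$ a.e.\ follow (using $\theta>0$ a.e.). Without this step the weak limit of the genuinely nonlinear flux $\rho_\delta\theta_\delta^2$ cannot be identified and \eqref{weak.2} does not close. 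A secondary point: in the renormalized equation the hypothesis ``$f',f''$ bounded'' is too weak; you need the decay $|f'(s)|\le C(1+s)^{-1}$, $|f''(s)|\le C(1+s)^{-2}$ so that $f''(\rho_\delta)\na\rho_\delta\cdot\na(\rho_\delta\theta_\delta)=4\rho_\delta f''(\rho_\delta)\sqrt{\theta_\delta}\na\sqrt{\rho_\delta}\cdot\na\sqrt{\rho_\delta\theta_\delta}$ is controlled in $L^1$ by the entropy bounds on $\sqrt{\theta_\delta}\na\sqrt{\rho_\delta}$ and $\na\sqrt{\rho_\delta\theta_\delta}$; with merely bounded $f''$ the factor $\rho_\delta$ is uncontrolled.
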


The paper is organized as follows. Equations \eqref{1.eq} are formally derived
from a relaxation-time kinetic model in Section \ref{sec.model}, while
the proof of Theorem \ref{thm.ex} is presented in Section \ref{sec.ex}.


\section{Formal derivation from a kinetic model}
\label{sec.model}

We consider a gas which is rarefied enough such that collisions between gas particles 
can be neglected, but there are thermalizing collisions at a fixed rate 
with a nonmoving background. This is modeled by sampling post-collisional velocities 
from a Maxwellian distribution with zero mean velocity and with 
the background temperature, which is determined from the assumptions of energy 
conservation as well as heat transport in the background governed by the 
Fourier law. These assumptions lead to the equations
\begin{align}
  \label{kin resc}
  \eps^2 \pa_t f_\eps + \eps v\cdot \na f_\eps 
	&= \rho_\eps M(\theta_\eps) - f_\eps, \\
  \label{heat resc}
  \eps^2 (\pa_t \theta_\eps - \Delta \theta_\eps) 
	&= \frac12\int_{\R^3} |v|^2 (f_\eps-\rho_\eps M(\theta_\eps))dv,
\end{align}
which are written in dimensionless form with a diffusive macroscopic scaling with 
the scaled Knudsen number $0<\eps\ll 1$. The gas is described by the 
distribution function $f_\eps(x,v,t)$ with the velocity $v\in\R^3$, and the 
temperature of the background is $\theta_\eps(x,t)$. The gradient and Laplace 
operators are meant with respect to the position variable $x$, 
and the Maxwellian is given by
\begin{equation}
\label{Gaussian}
   M(\theta;v) = \frac{1}{(2\pi \theta)^{3/2}} 
	\exp\bigg(-\frac{|v|^2}{2\theta}\bigg).
\end{equation}
Finally, the position density of the gas is defined by 
$$
  \rho_\eps(x,t) = \int_{\R^3} f_\eps(x,v,t)dv. 
$$
The right-hand side of the heat equation \eqref{heat resc} has been chosen such that 
the sum of the kinetic energy of the gas and the thermal energy
of the background is conserved. 
In \cite{FaScPi20}, the energy-transport system \eqref{1.eq} has been derived 
formally from \eqref{kin resc}--\eqref{heat resc} in the macroscopic limit 
$\eps\to 0$. We repeat the argument here for completeness. 

In the computations, the moments of the Maxwellian up to order 4 will be needed:
\begin{equation}\label{moments}
\begin{aligned}
  & \int_{\R^3} M(\theta;v)dv = 1,\quad 
	\int_{\R^3} vM(\theta;v)dv =  \int_{\R^3} v|v|^2M(\theta;v)dv = 0, \\
  & \int_{\R^3} v_i v_j M(\theta;v)dv = \theta \delta_{ij},\quad  
	\int_{\R^3} v_i v_j|v|^2 M(\theta;v)dv = 5\theta^2\delta_{ij},
\end{aligned}
\end{equation}
where $v_i$, $v_j$ denote the components of $v$ ($i,j=1,2,3$). From 
\eqref{kin resc}--\eqref{heat resc}, the local conservation laws for mass and energy,
\begin{align*}
  & \pa_t \rho_\eps + \diver \bigg(\frac{1}{\eps}\int_{\R^3} vf_\eps dv\bigg) = 0,\\
  & \pa_t \bigg(\theta_\eps + \frac12\int_{\R^3} |v|^2 f_\eps dv\bigg) 
	+ \diver \bigg( \frac{1}{2\eps}\int_{\R^3} v|v|^2 f_\eps dv 
	- \na\theta_\eps\bigg) = 0,
\end{align*}
can be derived by integration of \eqref{kin resc} with respect to $v$ and, 
respectively, by integration of \eqref{kin resc} against $|v|^2/2$ and adding to 
\eqref{heat resc}.

In a formal convergence analysis, we assume $f_\eps\to f$, $\rho_\eps \to \rho$, 
and $\theta_\eps\to\theta$ as $\eps\to 0$ and deduce from \eqref{kin resc}
that $f=\rho M(\theta)$. With \eqref{moments}, we obtain for the kinetic energy 
density
$$
   \lim_{\eps\to 0} \frac12\int_{\R^3} |v|^2 f_\eps dv = \frac32\rho\theta.
$$
The limit of the mass flux is obtained by multiplication of \eqref{kin resc} by 
$v/\eps$, integration with respect to $v$, and passing to the limit, using again
\eqref{moments}:
\begin{align*}
  \lim_{\eps\to 0} \bigg(\frac{1}{\eps}\int_{\R^3} vf_\eps dv\bigg)  
	&= -\int_{\R^3} v (v\cdot\nabla(\rho M(\theta;v)))dv \\
  &= -\diver \bigg( \rho \int_{\R^3} v\otimes v M(\theta;v)dv\bigg) 
	= - \na(\rho\theta).
\end{align*}
Analogously, we compute the flux of the kinetic energy,
\begin{align*}
  \lim_{\eps\to 0} \bigg(\frac{1}{2\eps}\int_{\R^3} v_i|v|^2 f_\eps dv\bigg)  
	&= -\frac12\int_{\R^3} v_i|v|^2 (v\cdot\nabla(\rho M(\theta;v)))dv \\
  &= - \frac12\sum_{j=1}^3\frac{\pa}{\pa x_j}\bigg( \rho \int_{\R^3} v_iv_j 
	|v|^2 M(\theta;v)dv\bigg) 
	= - \frac52\frac{\pa}{\pa x_i}(\rho\theta^2)
\end{align*}
for $i=1,2,3$. 
Using these results in the limits of the conservation laws leads to \eqref{1.eq}.


\section{Proof of Theorem \ref{thm.ex}}\label{sec.ex}

We approximate equations \eqref{1.eq} in the following way.
The time derivative is replaced by the implicit Euler discretization
with parameter $\tau>0$. This is needed to avoid issues related to 
the time regularity.
A higher-order $H^4$ regularization for $\phi=\pa h/\pa\rho$ in the mass 
balance equation with parameter $\eps>0$
gives $H^2(\Omega)$ regularity and compactness in
$W^{1,4}(\Omega)$. Furthermore, $H^2(\Omega)$ and $W^{1,4}(\Omega)$ regularizations 
for $\log\theta$ with the same parameter 
are added to the energy balance equation. The $W^{1,4}(\Omega)$
regularization is needed to derive estimates when using both $\log\theta$ and 
$-1/\theta$ as test functions in \eqref{1.eq}.
Furthermore, we add an additional $H^2(\Omega)$ regularization for $\phi$ in the 
mass balance equation with parameter $\delta>0$, which removes the degeneracy 
of the diffusion matrix $M$ in \eqref{1.eq2}.
Finally, we add the artificial heat flux $\Delta\theta^{3}$ 
in the energy density equation with the same parameter $\delta$ to obtain 
gradient estimates for the temperature, and we add the term $\theta^{-N}\log\theta$
for some $N>0$ to achieve an estimate for $\theta^{-(N+1)}$. 

After having proved the existence of solutions to the approximate problem and some
a priori estimates coming from the entropy inequality, we perform the limits
$\eps\to 0$, $\tau\to 0$, and $\delta\to 0$ (in this order). 

\subsection{Solution of the approximate problem}

We wish to solve a system which approximates \eqref{1.eq} and is formulated
in the variables $\phi$ and $w=\log\theta$, similarly as in \eqref{1.eq2}.
We interpret $\rho$ and $E=\theta(1+\frac32\rho)$ as functions of $(\phi,w)$, i.e.
$$
  \rho(\phi,w) = \exp\bigg(\phi+\frac32w-\frac52\bigg), \quad
	E(\rho,w) = \bigg(1+\frac32\rho(\phi,w)\bigg)\exp(w).
$$
In this notation, the diffusion coefficients become
\begin{equation}\label{2.M}
  M_{11} = \rho e^w, \quad
	M_{12} = \frac{5}{2}\rho e^{2w}, \quad
  M_{22} = e^{2w}\bigg(1+\frac{35}{4}\rho e^w\bigg).
\end{equation}

Let $T>0$ and let the approximation parameters $\tau>0$ (such that $T/\tau\in\N$), 
$\eps>0$, and $\delta>0$ be given. Furthermore, let $0<N<5$ be a number needed for the
approximation $\theta^{-N}\log\theta$ in the energy balance equation.

We wish to find $(\phi^k,w^k)\in H^2(\Omega;\R^2)$ such that, 
with $\rho^k=\rho(\phi^k,w^k)$, $E^k=E(\rho^k,w^k)$,
\begin{align}
  0 &= \frac{1}{\tau}\int_\Omega(\rho^k-\rho^{k-1})\psi_1 dx
	+ \int_\Omega(M_{11}^k\na\phi^k + M_{12}^ke^{-w^k}\na w^k)\cdot\na\psi_1 dx 
	\label{ap1} \\
	&\phantom{xx}{}+ \eps\int_\Omega D^2\phi^k:D^2\psi_1 dx
	+ \delta\int_\Omega(\na\phi^k\cdot\na\psi_1 + \phi^k\psi_1) dx, \nonumber \\
  0 &= \frac{1}{\tau}\int_\Omega(E^k-E^{k-1})\psi_2 dx 
	+ \int_\Omega(M_{12}^k\na\phi^k + M_{22}^ke^{-w^k}\na w^k)\cdot\na\psi_2 dx
	\label{ap2} \\
	&\phantom{xx}{}+ \eps\int_\Omega e^{w^k}\big(D^2 w^k:D^2\psi_2 
	+ |\na w^k|^2\na w^k\cdot\na\psi_2\big)dx 
	+ \eps\int_\Omega(1+e^{w^k})w^k\psi_2 dx \nonumber \\
  &\phantom{xx}{}
	+ \delta\int_\Omega e^{3w^k}\na w^k\cdot\na\psi_2 dx
	+ \delta\int_\Omega e^{-N w^k}w^k\psi_2 dx
	\nonumber
\end{align}
for all $(\psi_1,\psi_2)\in H^2(\Omega;\R^2)$, and $M_{ij}^k$ are given by
\eqref{2.M} with $(\rho,w)$ replaced by $(\rho^k,w^k)$. The existence of solutions
to \eqref{ap1}--\eqref{ap2} is shown in two steps.

{\em Step 1: solution of the linearized approximated problem.}
In the following, we drop the superindex $k$.
Let $(\widetilde\phi,\widetilde w)\in W^{1,4}(\Omega;\R^2)$ be given and set
$\widetilde\rho=\rho(\widetilde\phi,\widetilde w)$, 
$\widetilde E=E(\widetilde\phi,\widetilde w)$.
We wish to find $(\phi,w)\in H^2(\Omega;\R^2)$ such that 
\begin{equation}\label{lm}
  a_1(\phi,\psi_1) = \sigma F_1(\psi_1), \quad a_2(w,\psi_2) = \sigma F_2(\psi_2)
\end{equation}
for all $(\psi_1,\psi_2)\in H^2(\Omega;\R^2)$, where $\sigma\in[0,1]$ and
\begin{align*}
  a_1(\phi,\psi_1) &= \eps\int_\Omega D^2\phi:D^2\psi_1 dx
	+ \delta\int_\Omega(\na\phi\cdot\na\psi_1 + \phi\psi_1) dx, \\
	a_2(w,\psi_2) &= \eps\int_\Omega e^{\widetilde w}\big(D^2 w:D^2\psi_2 
	+ |\na\widetilde w|^2\na w\cdot\na\psi_2\big)dx 
	+ \eps\int_\Omega(1+e^{\widetilde w})w\psi_2 dx \\
	&\phantom{xx}{}+ \delta\int_\Omega e^{3\widetilde w}\na w
	\cdot\na\psi_2 dx + \delta\int_\Omega e^{-N \widetilde{w}}w\psi_2 dx, \\
  F_1(\psi_1) &= -\frac{1}{\tau}\int_\Omega(\widetilde\rho-\rho^{k-1})\psi_1 dx
	- \int_\Omega(\widetilde M_{11}\na\widetilde\phi + \widetilde M_{12}
	e^{-\widetilde w}\na\widetilde w)\cdot\na\psi_1 dx\\
  F_2(\psi_2) &= -\frac{1}{\tau}\int_\Omega(\widetilde E-E^{k-1})\psi_2 dx 
	- \int_\Omega(\widetilde M_{12}\na\widetilde\phi + \widetilde M_{22}
	e^{-\widetilde w}\na\widetilde w)\cdot\na\psi_2 dx,
\end{align*}
where $\widetilde{M}_{ij}$ is given by \eqref{2.M} with $(\rho,w)$ replaced by
$(\widetilde\rho,\widetilde w)$.
The bilinear forms $a_1$ and $a_2$ are coercive on $H^2(\Omega)$ since,
by the generalized Poincar\'e inequality \cite[Chap.~2, Sect.~1.4]{Tem97},
\begin{align*}
  a_1(\phi,\phi) &= \eps\int_\Omega |D^2\phi|^2 dx 
	+ \delta\int_\Omega (|\na\phi|^2 + \phi^2) dx
	\ge \min\{\eps,\delta\}\|\phi\|_{H^2(\Omega)}^2, \\
  a_2(w,w) &\ge \eps\int_\Omega(C|D^2 w|^2 + w^2)dx \ge \eps C\|w\|_{H^2(\Omega)}^2
\end{align*}
for some constant $C>0$.
The linear forms $F_1$ and $F_2$ are continuous on $H^2(\Omega)$ since,
by the continuous embedding $W^{1,4}(\Omega)\hookrightarrow L^\infty(\Omega)$,
$\widetilde\phi$ and $\widetilde w$ are $L^\infty(\Omega)$ functions such that
$\widetilde\rho$, $\widetilde E\in L^\infty(\Omega)$ too. 
The Lax--Milgram lemma implies the
existence of a unique solution $(\phi,w)$ to \eqref{lm} such that
$\rho=\rho(\phi,w)>0$ and $E=E(\phi,w)>0$. This defines the fixed-point operator 
$S:W^{1,4}(\Omega;\R^2)\times[0,1]\to W^{1,4}(\Omega;\R^2)$,
$S(\widetilde\phi,\widetilde w,\sigma)=(\phi,w)$, where $(\phi,w)$ solves \eqref{lm}.

{\em Step 2: solution to the approximate problem.}
We wish to apply the Leray--Schauder fixed-point theorem. It holds that
$S(\widetilde\phi,\widetilde w,0)=0$. Standard arguments show that 
$S:W^{1,4}(\Omega;\R^2)\to H^2(\Omega;\R^2)$ is continuous.
Since $H^2(\Omega;\R^2)$ is compactly embedded
into $W^{1,4}(\Omega;\R^2)$, $S:W^{1,4}(\Omega;\R^2)\to W^{1,4}(\Omega;\R^2)$ 
is compact. It remains to show that 
there exists a uniform bound in $W^{1,4}(\Omega;\R^2)$ for all fixed points. 

Let $\sigma\in(0,1]$ and let $(\phi,w)$ be a fixed point of $S(\cdot,\cdot,\sigma)$. 
It is a solution to \eqref{ap1}--\eqref{ap2}
with $\phi=\phi^k$, $w=w^k$, $\rho=\rho^k$, and $E=E^k$.
We use the test functions $\psi_1=\phi$ and $\psi_2=1-e^{-w}$ in \eqref{ap1} and
\eqref{ap2}, respectively, and add both equations. 
(We use $1-e^{-w}$ instead of $-e^{-w}$ as a test function in order to be able to 
treat the term $\eps\int_\Omega (1+e^w)w\psi_2 dx$ and to obtain the entropy
and energy balance in one single equation.) Then
\begin{align}
  0 &= \frac{\sigma}{\tau}\int_\Omega\big((\rho-\rho^{k-1})\phi 
	+ (E-E^{k-1})(1-e^{-w})\big)dx \nonumber \\
	&\phantom{xx}{}+ \int_\Omega\big(M_{11}|\na\phi|^2 + 2M_{12}e^{-w}\na\phi\cdot\na w
	+ M_{22}e^{-2w}|\na w|^2\big)dx	\nonumber \\
	&\phantom{xx}{}
	+ \eps\int_\Omega e^w\big(D^2w:D^2(-e^{-w}) + e^{-w}|\na w|^4\big)dx
	+ \delta\int_\Omega e^{2w} |\na w|^2 dx \nonumber \\
	&\phantom{xx}{}+ \eps\int_\Omega |D^2\phi|^2 dx
	+ \delta\int_\Omega (|\na\phi|^2+\phi^2) dx
	+ \eps \int_\Omega (1+e^w)w(1-e^{-w})dx\nonumber\\
	&\phantom{xx}{}+ \delta\int_\Omega e^{-(N+1) w}w(e^{w}-1) dx \nonumber \\
	&=: I_1+\cdots+I_8.
	\label{aux1}
\end{align}

To estimate the first integral $I_1$, we use the entropy density \eqref{1.h}, 
formulated in terms of the variables $(\rho,E)$,
$$
  h(\rho,\theta) = \widetilde h(\rho,E) 
	= \rho\log\rho + \bigg(1+\frac32\rho\bigg)\log\frac{E}{1+\frac32\rho}.
$$
The function $\widetilde h$ in the variables $(\rho,E)$ is convex, 
since the determinant of its Hessian,
$$
  D^2\widetilde h(\rho,E) = \begin{pmatrix}
	\frac{1}{\rho} + \frac94(1+\frac32\rho)^{-1} & -\frac32\frac{1}{E} \\
	-\frac32\frac{1}{E} & (1+\frac32\rho)E^{-2}
	\end{pmatrix}
$$
equals $(1+\frac32\rho)/(\rho E^{2})$, which is positive. 
This implies that
$$
  \widetilde h(\rho_1,E_1)-\widetilde h(\rho_2,E_2) 
	\le D\widetilde h(\rho_1,E_1)\cdot
	\begin{pmatrix} \rho_1-\rho_2 \\ E_1-E_2 \end{pmatrix}
	= (\rho_1-\rho_2)\phi + (E_1-E_2)(-e^{-w})
$$
for any $(\rho_1,E_2)$, $(\rho_2,E_2)>0$, and consequently,
$$
  I_1 \ge \frac{\sigma}{\tau}\int_\Omega\big(\widetilde h(\rho,E)
	-\widetilde h(\rho^{k-1},E^{k-1})\big)dx
	+ \frac{\sigma}{\tau}\int_\Omega (E-E^{k-1})dx.
$$
The second integral $I_2$ is nonnegative since
\begin{align*}
  M_{11} & |\na\phi|^2 + 2M_{12}e^{-w}\na\phi\cdot\na w
	+ M_{22}e^{-2w}|\na w|^2 \\
  &= \rho e^w|\na\phi|^2 + 5\rho e^w\na\phi\cdot\na w
	+ \bigg(1+\frac{35}{4}\rho e^w\bigg)|\na w|^2 \\
	&= \rho e^w\bigg(\frac18|\na\phi|^2 
	+ \frac78\bigg|\na\phi+\frac{20}{7}\na w\bigg|^2 
	+ \frac{45}{28}|\na w|^2\bigg) + |\na w|^2 \\
	&\ge \frac18\rho e^w|\na\phi|^2 + \big(1+\rho e^w\big)|\na w|^2.
\end{align*}
The integrals $I_3$, $I_7$, and $I_8$ are estimated according to
\begin{align*}
  I_3 &= \frac12\big(|D^2 w|^2 + |D^2 w - \na w\otimes \na w|^2 + |\na w|^4\big)
	\ge \frac12\big(|D^2 w|^2 + |\na w|^4\big), \\
	I_7 &= 2\eps \int_\Omega w\sinh(w)dx \ge \eps \int_\Omega w^2 dx, \\
	I_8 &= \delta\int_\Omega e^{-(N+1)w}w(e^w-1)dx
	\ge \delta\int_\Omega e^{-(N+1)w}1_{\{w>-2\}}dx \\
	&= \delta\int_\Omega e^{-(N+1)w}dx - \delta\int_{\{w\ge -2\}}e^{-(N+1)w}dx \\
	&\ge \delta\int_\Omega e^{-(N+1)w}dx 
	- \delta e^{2(N+1)}\operatorname{meas}(\Omega).
\end{align*}
Therefore, we obtain from \eqref{aux1}
\begin{align}
  \frac{\sigma}{\tau}&\int_\Omega \big(\widetilde h(\rho,E)+E\big)dx
	+ \sigma\int_\Omega\bigg\{\frac18 \rho e^w|\na\phi|^2
	+ \big(1+\rho e^w\big)|\na w|^2\bigg\} dx \nonumber \\
	&\phantom{xx}{}+ \eps\int_\Omega |D^2\phi|^2 dx
	+ \frac{\eps}{2}\int_\Omega(|D^2 w|^2+|\na w|^4+w^2)dx
	 \nonumber \\
	&\phantom{xx}{}+ \delta\int_\Omega e^{2w}|\na w|^2 dx
	+ \delta\int_\Omega (|\na\phi|^2+\phi^2) dx 
	+ \delta\int_\Omega e^{-(N+1) w}dx\nonumber \\
  &\le \frac{\sigma}{\tau}\int_\Omega \big(\widetilde h(\rho^{k-1},E^{k-1})
	+E^{k-1}\big)dx + C\delta,
	\label{ei}
\end{align}
where $C>0$ is here and in the following a generic constant independent of
$\tau$, $\eps$, and $\delta$. 
This gives a uniform $H^2(\Omega)$ estimate for $\phi$ and $w$, independent of
$\sigma$ (but depending on $\eps$ and $\delta$), and hence the desired uniform estimate
for $(\phi,w)$ in $W^{1,4}(\Omega;\R^2)$. By the Leray--Schauder fixed-point theorem,
there exists a solution $(\phi^k,w^k):=(\phi,w)\in H^2(\Omega;\R^2)$ to 
\eqref{ap1}--\eqref{ap2} with $\sigma=1$, $\rho^k=\rho(\phi^k,w^k)$, and 
$E^k=E(\phi^k,w^k)$. Moreover, this solution satisfies \eqref{ei} with $\sigma=1$.

We reformulate equations \eqref{ap1}--\eqref{ap2} by inserting definition \eqref{2.M}
of the diffusion coefficients and computing (we drop the superindex $k$)
\begin{align*}
  M_{11}\na\phi + M_{12}e^{-w}\na w
	&= \rho\theta\na\bigg(\log\rho-\frac32\log\theta\bigg)
	+ \frac{5}{2}\rho\na\theta = \na(\rho\theta), \\
  M_{12}\na\phi + M_{22}e^{-w}\na w
  &= \frac{5}{2}\rho\theta^2\na\bigg(\log\rho-\frac32\log\theta\bigg) 
	+ \bigg(1+\frac{35}{4}\rho\theta\bigg)\na\theta
	= \na\bigg(\theta + \frac{5}{2}\rho\theta^2\bigg).
\end{align*}
Therefore, $(\phi^k,\rho^k,\theta^k,w^k)$ solves
\begin{align}
  0 &= \frac{1}{\tau}\int_\Omega(\rho^k-\rho^{k-1})\psi_1 dx
	+ \int_\Omega\na(\rho^k\theta^k)\cdot\na\psi_1 dx 
	+ \eps\int_\Omega D^2\phi^k:D^2\psi_1 dx \label{ap3} \\
	&\phantom{xx}{}+ \delta\int_\Omega (\na\phi^k\cdot\na\psi_1 + \phi^k\psi_1) dx, 
	\nonumber \\
  0 &= \frac{1}{\tau}\int_\Omega(E^k-E^{k-1})\psi_2 dx 
	+ \int_\Omega\na\bigg(\theta^k+\frac{5}{2}\rho^k(\theta^k)^2\bigg)\cdot\na\psi_2 dx
	\label{ap4} \\
	&\phantom{xx}{}+ \delta\int_\Omega e^{3w^k}\na w^k\cdot\na\psi_2 dx
	+ \eps \int_\Omega(1+e^{w^k})w^k\psi_2 dx \nonumber \\
  &\phantom{xx}{}+ \eps\int_\Omega e^{w^k}\big(D^2 w^k:D^2\psi_2 
	+ |\na w^k|^2\na w^k\cdot\na\psi_2\big)dx 
	+\delta\int_\Omega e^{-N w^k}w^k\psi_2 dx \nonumber
\end{align}
for test functions $\psi_1$, $\psi_2\in H^2(\Omega)$. 


\subsection{Uniform estimates}

Set $\theta^{k-1}=\exp(w^{k-1})$ and $\theta^k=\exp(w^k)$. In the
following, we drop again the superindex $k$ to simplify the notation. 
We reformulate inequality \eqref{ei} to obtain gradient estimates for 
expressions depending on $\rho$ and $\theta$.
We estimate the second integral in \eqref{ei}:
\begin{align*}
  \frac18 & \rho e^w|\na\phi|^2 + \big(1+\rho e^w\big)|\na w|^2 \\
	&= |\na\log\theta|^2 + \frac18\rho\theta\bigg|\frac{\na\rho}{\rho}
	-\frac32\frac{\na\theta}{\theta}\bigg|^2
	+ \rho\frac{|\na\theta|^2}{\theta} \\
	&= |\na\log\theta|^2 + \frac18\rho\theta\bigg(\frac{|\na\rho|^2}{\rho^2}
	- 3\frac{\na\rho}{\rho}\cdot\frac{\na\theta}{\theta}
	+ \frac{41}{4}\frac{|\na\theta|^2}{\theta^2} \bigg) \\
	&\ge |\na\log\theta|^2 + \frac{1}{16}\rho\theta\bigg(\frac{|\na\rho|^2}{\rho^2}
	+ \frac{|\na\theta|^2}{\theta^2}\bigg) \\
	&= |\na\log\theta|^2 + \frac{1}{32}\rho\theta\bigg(\frac{|\na\rho|^2}{\rho^2}
	+ \frac{|\na\theta|^2}{\theta^2}\bigg) \\
	&\phantom{xx}{}+ \frac{1}{32}\big|\sqrt{\theta}\na\sqrt{\rho}
	+ \sqrt{\rho}\na\sqrt{\theta}\big|^2
	+ \frac{1}{32}\big|\sqrt{\theta}\na\sqrt{\rho} - \sqrt{\rho}\na\sqrt{\theta}\big|^2 \\
	&\ge |\na\log\theta|^2 + \frac18\theta|\na\sqrt{\rho}|^2
	+ \frac{1}{32}\big|\na\sqrt{\rho\theta}\big|^2.
\end{align*}
We infer from \eqref{ei} with $\sigma=1$ the reformulated discrete entropy inequality
\begin{align}
  \frac{1}{\tau}\int_\Omega & \big(\widetilde h(\rho,E)+E\big)dx
	+ \int_\Omega|\na\log\theta|^2 dx + \frac18\int_\Omega\theta|\na\sqrt{\rho}|^2 dx
	+ \frac{1}{64}\int_\Omega|\na\sqrt{\rho\theta}|^2 dx \nonumber \\
  &\phantom{xx}{}+ \eps\int_\Omega |D^2\phi|^2 dx
	+ \frac{\eps}{2}\int_\Omega(|D^2 w|^2 + |\na w|^4 + w^2)dx 
	\nonumber \\
	&\phantom{xx}{}+ \delta\int_\Omega (|\na\phi|^2+ \phi^2) dx 
	+ \delta\int_\Omega |\na e^{w}|^2 dx + \delta\int_\Omega e^{-(N+1) w}dx	
	\nonumber\\
	&\le \frac{1}{\tau}\int_\Omega\big(\widetilde h(\rho^{k-1},E^{k-1})+E^{k-1}\big)dx 
	+ \delta C.	\label{ei2}
\end{align}
There exists $c\in(0,1)$ such that $x-\log x\ge c(x+|\log x|)$ for all $x>0$.
Therefore, 
\begin{align*}
  \widetilde h(\rho,E) + E 
	&= \rho\log\rho - \frac32\rho\log\theta - \log\theta 
	+ \bigg(1+\frac32\rho\bigg)\theta \nonumber \\
	&= \rho\log\rho + \bigg(1+\frac32\rho\bigg)(\theta-\log\theta)
  \ge \rho\log\rho + c(1+\rho)(\theta+|\log\theta|). 
\end{align*}
This provides the following uniform estimates independent of $(\delta,\eps,\tau)$:
\begin{equation}\label{aux22}
  \|\rho\log\rho\|_{L^1(\Omega)} + \|\theta\|_{L^1(\Omega)}
	+ \|\rho\theta\|_{L^1(\Omega)} + \|\log\theta\|_{L^1(\Omega)} \le C.
\end{equation}

\subsection{Limit $\eps\to 0$}

Let $\phi_\eps=\phi^k$, $w_\eps=w^k$ be a solution to \eqref{ap1}--\eqref{ap2}.
We set $\rho_\eps=\rho(\phi_\eps,w_\eps)$, $E_\eps=E(\rho_\eps,w_\eps)$,
$\theta_\eps=\exp(w_\eps)$, and $\phi_\eps=\log(\rho_\eps/\theta_\eps^{3/2})+5/2$. 
We deduce from \eqref{ei2} and \eqref{aux22} the following bounds which are
independent of $\eps$ and $\delta$ (but not of $\tau$):
\begin{align*}
  \|\rho_\eps\log\rho_\eps\|_{L^1(\Omega)}
	+ \|\theta_\eps\|_{L^1(\Omega)} + \|\rho_\eps\theta_\eps\|_{L^1(\Omega)}
	+ \|\log\theta_\eps\|_{L^1(\Omega)} &\le C, \\
  \|\sqrt{\theta_\eps}\na\sqrt{\rho_\eps}\|_{L^2(\Omega)}
	+ \|\na\sqrt{\rho_\eps\theta_\eps}\|_{L^2(\Omega)}
	+ \|\na\log\theta_\eps\|_{L^2(\Omega)} &\le C(\tau), \\
	\sqrt{\eps}\|\phi_\eps\|_{H^2(\Omega)} + \sqrt{\eps}\|w_\eps\|_{H^2(\Omega)}
	&\le C(\tau), \\
	\sqrt{\delta}\|\phi_\eps\|_{H^1(\Omega)} 
	+ \sqrt{\delta}\|\na\theta_\eps\|_{L^2(\Omega)}
	+ \delta\|\theta_\eps^{-(N+1)}\|_{L^1(\Omega)} &\le C(\tau).
\end{align*}
These bounds allow us to derive further estimates.  By the Poincar\'e inequality,
we have
\begin{align*}
  \|\theta_\eps\|_{L^2(\Omega)} &\le C\|\na\theta_\eps\|_{L^2(\Omega)}
	+ \|\theta_\eps\|_{L^1(\Omega)} \le C(\tau)\delta^{-1/2}, \\
	\|\log\theta_\eps\|_{L^2(\Omega)} &\le C\|\na\log\theta_\eps\|_{L^2(\Omega)}
	+ \|\log\theta_\eps\|_{L^1(\Omega)} \le C(\tau).
\end{align*}
This gives $\eps$-uniform bounds for $\theta_\eps$ and $\log\theta_\eps$
in $H^1(\Omega)$:
$$
  \|\theta_\eps\|_{H^1(\Omega)} \le C(\tau)\delta^{-1/2}, \quad
	\|\log\theta_\eps\|_{H^1(\Omega)} \le C(\tau).
$$
The $L^1(\Omega)$ bound for $\rho_\eps\theta_\eps$ and the $L^2(\Omega)$ bound for
$\na\sqrt{\rho_\eps\theta_\eps}$ imply that
$$
  \|\sqrt{\rho_\eps\theta_\eps}\|_{H^1(\Omega)} \le C(\tau).
$$
These estimates provide a uniform bound for the energy. Indeed, we deduce from 
the Sobolev embedding $H^1(\Omega)\hookrightarrow L^6(\Omega)$ that
$\na(\rho_\eps\theta_\eps)=2\sqrt{\rho_\eps\theta_\eps}\na\sqrt{\rho_\eps\theta_\eps}$
is uniformly bounded in $L^{3/2}(\Omega)$. This shows that $(E_\eps)$ is bounded
in $W^{1,3/2}(\Omega)$. 

We know that $(\log\theta_\eps)$ and $(\phi_\eps)$ are bounded in $H^1(\Omega)$.
Consequently, $\log\rho_\eps=\phi_\eps + \frac32\log\theta_\eps-\frac52$ is
bounded in $H^1(\Omega)$ too, i.e.
$$
  \sqrt{\delta}\|\log\rho_\eps\|_{H^1(\Omega)} \le C.
$$

The previous uniform bounds are sufficient to perform the limit $\eps\to 0$.
There exist subsequences which are not relabeled such that, as $\eps\to 0$,
\begin{align*}
  \phi_\eps\to \phi &\quad\mbox{strongly in }L^{p}(\Omega)\mbox{ and weakly in }
	H^1(\Omega), \\
  \log\rho_\eps \to Y &\quad\mbox{strongly in }L^{p}(\Omega)
	\mbox{ and weakly in } H^{1}(\Omega), \\
  \theta_\eps\to \theta &\quad\mbox{strongly in }L^{p}(\Omega)
	\mbox{ and weakly in }H^1(\Omega), \\
  \log\theta_\eps\to Z &\quad\mbox{strongly in }L^{p}(\Omega)
	\mbox{ and weakly in }H^1(\Omega), \\
  \eps\phi_\eps,\ \eps w_\eps \to 0 &\quad\mbox{strongly in }H^2(\Omega),\\
  \eps^{1/3}\na w_\eps\to 0 &\quad\mbox{strongly in }L^4(\Omega),
\end{align*}
where $1<p<6$ and $Y$, $Z$ are functions in $H^1(\Omega)$.
Up to a subsequence, we have $\log\rho_\eps\to Y$ and $\log\theta_\eps\to Z$ 
a.e.\ in $\Omega$. Thus, $\rho_\eps\to e^Y=:\rho$ and $\theta_\eps\to e^Z=:\theta$ 
a.e.\ in $\Omega$. In particular, $\rho>0$ and $\theta>0$ a.e.\ in $\Omega$.
It follows from
$$
  \int_{\{\rho_\eps\ge R\}}\rho_\eps dx
	\le \frac{1}{\log R}\int_{\{\rho_\eps\ge R\}}\rho_\eps\log\rho_\eps dx 
	\le \frac{C}{\log R}
$$
for any $R>1$
that $(\rho_\eps)$ is equi-integrable. Vitali's convergence theorem implies that
$\rho_\eps\to \rho$ strongly in $L^1(\Omega)$. 
Furthermore, possibly for a subsequence, $\sqrt{\rho_\eps\theta_\eps}\to
\sqrt{\rho\theta}$ a.e.\ in $\Omega$. The $H^1(\Omega)$ bound for 
$(\sqrt{\rho_\eps\theta_\eps})$ then yields
$$
  \sqrt{\rho_\eps\theta_\eps}\to\sqrt{\rho\theta}
	\quad\mbox{strongly in }L^{p}(\Omega)\mbox{ and weakly in }H^1(\Omega)\mbox{ and }
	L^6(\Omega),
$$
where $1<p<6$. Furthermore, we have
\begin{align*}
  E_\eps = \bigg(1+\frac32\rho_\eps\bigg)\theta_\eps
	\rightharpoonup E:=\bigg(1+\frac32\rho\bigg)\theta
	&\quad\mbox{weakly in }L^3(\Omega), \\
  \na(\rho_\eps\theta_\eps)=2\sqrt{\rho_\eps\theta_\eps}\na\sqrt{\rho_\eps\theta_\eps}
	\rightharpoonup\na(\rho\theta) &\quad\mbox{weakly in }L^{3/2}(\Omega), \\
	\na(\rho_\eps\theta_\eps^2) = \rho_\eps\theta_\eps\na\theta_\eps
	+ \theta_\eps\na(\rho_\eps\theta_\eps) \rightharpoonup \na(\rho\theta^2)
	&\quad\mbox{weakly in }L^{6/5}(\Omega).
\end{align*}
We deduce from the strong convergence of $(\phi_\eps)$, $(\rho_\eps)$, and 
$(\theta_\eps)$ as well as from the a.e.\ positivity of $\rho$ and $\theta$ that
$\phi=\log\rho-\frac32\log\theta+\frac52$ a.e.\ in $\Omega$. 

The uniform bounds for $w_\eps$ are sufficient to pass to the limit $\eps\to 0$
in the $\eps$-terms,
\begin{align*}
  \eps D^2\phi_\eps \to 0 &\quad\mbox{strongly in }L^2(\Omega), \\
  \eps\theta_\eps D^2 w_\eps \to 0 &\quad\mbox{strongly in }L^1(\Omega), \\
  \eps\theta_\eps|\na w_\eps|^2\na w_\eps \to 0 &\quad\mbox{strongly in }L^1(\Omega), \\
	\eps(1+\theta_\eps)w_\eps \to 0 &\quad\mbox{strongly in }L^2(\Omega),
\end{align*}
as well as in the $\delta$-terms. The most difficult term is
$\delta\int_\Omega e^{-N w_\eps}w_\eps\psi_2 dx$. 
It follows from $\sqrt{\theta}|\log\theta|^{(2N+1)/(2N)}\le C$ for $\theta\le 1$
and $\theta^{-(N+1)}\sqrt{\theta}|\log\theta|^{(2N+1)/(2N)}\le C$ for
$\theta>1$ as well as from \eqref{aux22} that
\begin{align}
  \delta\| e^{-Nw_\eps}w_\eps\|_{L^{(2N+1)/(2N)}(\Omega)}^{(2N+1)/(2N)}
	&= \delta\int_\Omega \theta_\eps^{-(N+1)}\sqrt{\theta_\eps}
	|\log\theta_\eps|^{(2N+1)/(2N)}dx \nonumber \\
	&\le \delta\int_\Omega\theta_\eps^{-(N+1)}dx + \delta C\le C(\tau). \label{eww}
\end{align}
Since $\delta e^{-Nw_\eps}w_\eps\to \delta\theta^{-N}\log\theta$ a.e.\ in $\Omega$, 
we conclude that this limit also holds strongly in $L^1(\Omega)$. 
Therefore, we can perform the limit $\eps\to 0$ in \eqref{ap3}--\eqref{ap4}
(now writing the superindex $k$) leading to
\begin{align}
  0 &= \frac{1}{\tau}\int_\Omega(\rho^k-\rho^{k-1})\psi_1 dx
	+ \int_\Omega\na(\rho^k\theta^k)\cdot\na\psi_1 dx
	+ \delta\int_\Omega(\na\phi^k\cdot\na\psi_1 + \phi^k\psi_1)dx, \label{aux3} \\
  0 &= \frac{1}{\tau}\int_\Omega(E^k-E^{k-1})\psi_2 dx
	+ \int_\Omega\na\bigg(\theta^k + \frac{5}{2}\rho^k(\theta^k)^2\bigg)\cdot
	\na\psi_2 dx \label{aux4} \\
	&\phantom{xx}{}+ \delta\int_\Omega(\theta^k)^2\na\theta^k\cdot\na\psi_2 dx
	+ \delta\int_\Omega(\theta^k)^{-N}\log(\theta^k)\psi_2 dx \nonumber
\end{align}
for any test functions $\psi_1\in W^{1,3}(\Omega)$, $\psi_2\in W^{1,6}(\Omega)$.


\subsection{Limit $\tau\to 0$}

We introduce the piecewise constant functions in time $\rho_\tau(x,t)
=\rho^k(x)$, $\theta_\tau(x,t)=\theta^k(x)$, $\phi_\tau(x,t)=\phi^k(x)$,
and $E_\tau(x,t)=E^k(x)$ for
$x\in\Omega$, $t\in((k-1)\tau,k\tau]$. Furthermore, let $(\pi_\tau u)(x,t)
=u^{k-1}(x)$ for $x\in\Omega$, $t\in((k-1)\tau,k\tau]$ 
be the shift operator for
piecewise constant functions $u$. We reformulate \eqref{aux3}--\eqref{aux4}:
\begin{align}
  0 &= \frac{1}{\tau}\int_0^T\int_\Omega(\rho_\tau-\pi_\tau\rho_\tau)\psi_1 dx dt 
	+  \int_0^T\int_\Omega\na(\rho_\tau\theta_\tau)\cdot\na\psi_1 dx dt 
	\label{ap5} \\
  &\phantom{xx}{}+ \delta\int_0^T\int_\Omega (\na\phi_\tau\cdot\na\psi_1 
	+ \phi_\tau\psi_1) dx dt, \nonumber \\
  0 &= \frac{1}{\tau}\int_0^T\int_\Omega (E_\tau-\pi_\tau E_\tau)\psi_2 dx dt 
	+ \int_0^T\int_\Omega \na\bigg(\theta_\tau+\frac{5}{2}\rho_\tau\theta_\tau^2\bigg) 
	\cdot\na\psi_2 dx dt \label{ap6} \\
  &\phantom{xx}{}+ \delta\int_0^T\int_\Omega \theta_\tau^2\na\theta_\tau\cdot
	\na\psi_2 dx dt
  + \delta\int_0^T\int_\Omega \theta_\tau^{-N}\log(\theta_\tau)\psi_2 dx dt \nonumber
\end{align}
for piecewise constant test functions in time
$\psi_1$, $\psi_2\in L^2(0,T;W^{1,6}(\Omega))$. By density \cite[Prop.~1.36]{Rou05},
these formulations hold for all test functions in $L^2(0,T;W^{1,6}(\Omega))$.
We collect the uniform estimates
from the discrete entropy inequality \eqref{ei2}:
\begin{align}
  \label{bound.1}
  \|\rho_\tau\log\rho_\tau\|_{L^\infty(0,T;L^{1}(\Omega))}
	+ \|\theta_\tau\|_{L^\infty(0,T;L^1(\Omega))} &\le C, \\
  \label{bound.2}
  \|\rho_\tau\theta_\tau\|_{L^\infty(0,T;L^1(\Omega))}
	+ \|\sqrt{\theta_\tau}\na\sqrt{\rho_\tau}\|_{L^2(\Omega_T)}
	+ \|\sqrt{\rho_\tau\theta_\tau}\|_{L^2(0,T;H^1(\Omega))} &\le C, \\
  \label{bound.3}
  \|\log\theta_\tau\|_{L^2(0,T; H^1(\Omega))} 
	+ \|\log\theta_\tau\|_{L^\infty(0,T;L^1(\Omega))} & \le C,\\
  \label{bound.4}
  \sqrt{\delta}\|\phi_\tau\|_{L^2(0,T;H^1(\Omega))}
  + \sqrt{\delta}\|\na\theta_\tau\|_{L^2(\Omega_T)} 
  + \delta\|\theta_\tau^{-(N+1)}\|_{L^1(\Omega_T)}	 &\le C,
\end{align}
where the constant $C>0$ does not depend on $\tau$ or $\delta$. 
In the following, we show some additional estimates for $(\rho_\tau,\theta_\tau)$.

\begin{lemma}[Mass and energy control]\label{lem.mass}
It holds for any $t\in(0,T)$ that
$$
  \bigg|\int_\Omega\rho_\tau(t)dx - \int_\Omega\rho^0 dx\bigg|\le C\delta^{1/2}, \quad
	\bigg|\int_\Omega E_\tau(t)dx - \int_\Omega E^0 dx\bigg|
	\le C\delta^{1/(2N+1)}.
$$
\end{lemma}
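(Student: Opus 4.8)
The plan is to test the discrete weak formulations \eqref{aux3}--\eqref{aux4} (equivalently the $k$-th time slice of \eqref{ap5}--\eqref{ap6}) with the constant functions $\psi_1\equiv 1$ and $\psi_2\equiv 1$: this annihilates every gradient term and leaves only the discrete time derivative together with the zeroth-order $\delta$-regularizations.

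\emph{Mass.} With $\psi_1\equiv 1$ in \eqref{aux3} one obtains $\int_\Omega(\rho^k-\rho^{k-1})\,dx = -\tau\delta\int_\Omega\phi^k\,dx$ for each $k$. Summing over $k=1,\dots,m$, where $m$ is chosen so that $t\in((m-1)\tau,m\tau]$, and telescoping gives
\[
  \int_\Omega\rho_\tau(t)\,dx - \int_\Omega\rho^0\,dx
	= -\delta\int_0^{m\tau}\!\!\int_\Omega\phi_\tau\,dx\,dt .
\]
Hence, by the Cauchy--Schwarz inequality and the bound \eqref{bound.4},
\[
  \bigg|\int_\Omega\rho_\tau(t)\,dx - \int_\Omega\rho^0\,dx\bigg|
	\le \delta\,|\Omega_T|^{1/2}\,\|\phi_\tau\|_{L^2(\Omega_T)}
	\le \delta\,|\Omega_T|^{1/2}\,\|\phi_\tau\|_{L^2(0,T;H^1(\Omega))}
	\le C\delta^{1/2}.
\]

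\emph{Energy.} With $\psi_2\equiv 1$ in \eqref{aux4} all gradient terms — including the artificial flux $\delta(\theta^k)^2\na\theta^k$ — vanish, leaving $\int_\Omega(E^k-E^{k-1})\,dx = -\tau\delta\int_\Omega(\theta^k)^{-N}\log\theta^k\,dx$. Telescoping as before,
\[
  \int_\Omega E_\tau(t)\,dx - \int_\Omega E^0\,dx
	= -\delta\int_0^{m\tau}\!\!\int_\Omega\theta_\tau^{-N}\log\theta_\tau\,dx\,dt ,
\]
so that $\big|\int_\Omega E_\tau(t)\,dx - \int_\Omega E^0\,dx\big| \le \delta\,\|\theta_\tau^{-N}\log\theta_\tau\|_{L^1(\Omega_T)}$. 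The naive $L^1$ control of $\theta_\tau^{-N}\log\theta_\tau$ (via $\log(1/\theta)\le 1/\theta$ and \eqref{bound.4}) only yields an $O(1)$ estimate, so instead I would repeat the computation of \eqref{eww} on $\Omega_T$: using $\sqrt\theta\,|\log\theta|^{(2N+1)/(2N)}\le C$ on $(0,1]$ and $\theta^{-(N+1)}\sqrt\theta\,|\log\theta|^{(2N+1)/(2N)}\le C$ on $[1,\infty)$ together with $\delta\|\theta_\tau^{-(N+1)}\|_{L^1(\Omega_T)}\le C$ from \eqref{bound.4}, one gets
\[
  \delta\,\big\|\theta_\tau^{-N}\log\theta_\tau\big\|_{L^{(2N+1)/(2N)}(\Omega_T)}^{(2N+1)/(2N)} \le C
\]
with $C$ independent of $\tau$ and $\delta$, i.e.\ $\|\theta_\tau^{-N}\log\theta_\tau\|_{L^{(2N+1)/(2N)}(\Omega_T)}\le C\delta^{-2N/(2N+1)}$. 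Since $|\Omega_T|<\infty$, Hölder's inequality upgrades this to $\|\theta_\tau^{-N}\log\theta_\tau\|_{L^1(\Omega_T)}\le C\delta^{-2N/(2N+1)}$, whence
\[
  \bigg|\int_\Omega E_\tau(t)\,dx - \int_\Omega E^0\,dx\bigg|
	\le C\,\delta^{1-2N/(2N+1)} = C\delta^{1/(2N+1)} .
\]

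\emph{Main obstacle.} Once the right test functions are used everything reduces to elementary manipulations; the only delicate point is extracting the sharp exponent $1/(2N+1)$ in the energy estimate, which makes it necessary to invest the $L^{(2N+1)/(2N)}$-integrability of $\delta\,\theta_\tau^{-N}\log\theta_\tau$ and interpolate down to $L^1$, rather than relying on the weaker $L^1$ bound. One must also verify that the constant in this $L^{(2N+1)/(2N)}$-estimate does not depend on $\tau$ (unlike the $C(\tau)$ written in \eqref{eww}, which was immaterial at that earlier step): this holds because it is controlled solely by \eqref{bound.4} and the two $\tau$-free pointwise inequalities above.
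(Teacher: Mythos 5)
Your proposal is correct and follows the same route as the paper: test \eqref{aux3}--\eqref{aux4} with $\psi_1=\psi_2\equiv 1$, telescope, and control the remaining zeroth-order $\delta$-terms via \eqref{bound.4} (for $\phi_\tau$) and the $L^{(2N+1)/(2N)}$ estimate of \eqref{eww} (for $\theta_\tau^{-N}\log\theta_\tau$). Your remark that the space--time version of \eqref{eww} carries a $\tau$-independent constant because it rests only on $\delta\|\theta_\tau^{-(N+1)}\|_{L^1(\Omega_T)}\le C$ from \eqref{bound.4} is exactly the point the paper leaves implicit, and your exponent bookkeeping $\delta\cdot\delta^{-2N/(2N+1)}=\delta^{1/(2N+1)}$ matches the stated rate.
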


\begin{proof}
Using $\psi_1=1$ in \eqref{aux3} and summing from $k=1,\ldots,n$ gives
$$
  \int_\Omega(\rho^n-\rho^0)dx
	= \sum_{k=1}^n\int_\Omega(\rho_\tau ^k-\rho^{k-1})dx 
	= \tau\delta\sum_{k=1}^n\int_\Omega\phi^k dx,
$$
where $n\le N$. We infer from bound \eqref{bound.4} for $(\phi_\tau)$ that
$$
  \bigg|\int_\Omega(\rho_\tau(t)-\rho^0)dx\bigg| 
	= \delta\bigg|\int_0^t\int_\Omega\phi_\tau dxdt\bigg|
	\le \delta^{1/2}C.
$$
The second statement follows after choosing $\psi_2=1$ in \eqref{aux4} and
using \eqref{eww}. 
\end{proof}

\begin{lemma}[Higher integrability]\label{lem.higher}
It holds that
\begin{align*}
  \|\theta_\tau\|_{L^2(\Omega_T)} 
	+ \|\rho_\tau^\alpha\theta_\tau^\beta\|_{L^1(\Omega_T)} 
	+ \delta^{1/4}\|\theta_\tau\|_{L^4(\Omega_T)} &\le C, 
\end{align*}
where $(\alpha,\beta)\in\{(1,2),(1,3),(\frac32,3),(2,1),(2,2),(2,3)\}$. 
\end{lemma}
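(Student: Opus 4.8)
The plan is to add, to the entropy bounds \eqref{bound.1}--\eqref{bound.4}, two applications of the $H^{-1}$ method together with elementary interpolation inequalities. For the energy, I would test \eqref{ap6} with $\psi_2(\cdot,t)=(-\Delta_N)^{-1}\big(E_\tau(\cdot,t)-\overline{E_\tau}(t)\big)$, where $(-\Delta_N)^{-1}$ inverts the Neumann Laplacian on functions of zero mean and $\overline{E_\tau}(t)=|\Omega|^{-1}\int_\Omega E_\tau(\cdot,t)\,dx$ is nearly constant by Lemma \ref{lem.mass}; by \eqref{bound.2} and the Sobolev embedding, $E_\tau\in L^2(0,T;W^{1,3/2}(\Omega))\hookrightarrow L^2(0,T;L^3(\Omega))$, so $\psi_2\in L^2(0,T;W^{2,3}(\Omega))$ is an admissible test function. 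Handling the discrete time derivative by the convexity of $v\mapsto\|v\|_{H^{-1}(\Omega)}^2$ and summing over the time steps, the gradient term becomes, after integration by parts, $\int_0^t\int_\Omega(\theta_\tau+\tfrac52\rho_\tau\theta_\tau^2)(E_\tau-\overline{E_\tau})\,dx\,ds$. Absorbing the lower-order part $\overline{E_\tau}(\theta_\tau+\tfrac52\rho_\tau\theta_\tau^2)$ by Young's inequality and \eqref{bound.2}, bounding the two $\delta$-terms of \eqref{ap6} by \eqref{bound.4} and \eqref{eww}, and using that $\|E^0-\overline{E^0}\|_{H^{-1}(\Omega)}$ is finite, the expansion
\begin{equation*}
  \Big(\theta+\tfrac52\rho\theta^2\Big)\Big(\theta+\tfrac32\rho\theta\Big)
  =\theta^2+\tfrac32\rho\theta^2+\tfrac52\rho\theta^3+\tfrac{15}{4}\rho^2\theta^3
\end{equation*}
then yields, in one stroke, the $L^2(\Omega_T)$ bound for $\theta_\tau$ and the cases $(\alpha,\beta)\in\{(1,2),(1,3),(2,3)\}$.

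Next, testing \eqref{ap5} analogously with $(-\Delta_N)^{-1}\big(\rho_\tau-\overline{\rho_\tau}\big)$ produces $\int_0^t\int_\Omega\rho_\tau^2\theta_\tau\,dx\,ds$ on the right, modulo the term $\overline{\rho_\tau}\int_\Omega\rho_\tau\theta_\tau\,dx$ (bounded by \eqref{bound.2} and Lemma \ref{lem.mass}), the $\delta$-contribution (bounded by \eqref{bound.4}) and the initial datum; this gives $\|\rho_\tau^2\theta_\tau\|_{L^1(\Omega_T)}\le C$, i.e.\ the case $(\alpha,\beta)=(2,1)$. The two remaining cases then follow from Young's inequality applied pointwise: $\rho^2\theta^2\le\tfrac12(\rho^2\theta+\rho^2\theta^3)$ and $\rho^{3/2}\theta^3\le\tfrac12(\rho\theta^3+\rho^2\theta^3)$, whose right-hand sides are in $L^1(\Omega_T)$ by the preceding steps.

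For the $\delta$-weighted bound $\delta^{1/4}\|\theta_\tau\|_{L^4(\Omega_T)}\le C$ I would exploit the artificial heat flux $\delta\Delta(\theta^3/3)$: testing \eqref{aux4} with $\psi_2=\theta^k$ and compensating the arising cross term by testing \eqref{aux3} with $\psi_1=\tfrac34(\theta^k)^2$ — so that, by convexity of $(\rho,E)\mapsto\tfrac12 E\theta$, the discrete time derivatives are controlled as in the derivation of \eqref{aux1} — one arrives, after absorbing the residual gradient terms by the entropy dissipation of \eqref{ei2} and the remaining $\delta$-terms, at $\sqrt\delta\,\|\na(\theta_\tau^2)\|_{L^2(\Omega_T)}\le C$. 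Combining $\sqrt\delta\,\na(\theta_\tau^2)\in L^2(\Omega_T)$ with $\theta_\tau^2\in L^1(\Omega_T)$ from the first step, via the embedding $H^1(\Omega)\hookrightarrow L^6(\Omega)$ and a short absorption argument, gives $\sqrt\delta\,\theta_\tau^2\in L^2(\Omega_T)$, that is, $\delta^{1/4}\|\theta_\tau\|_{L^4(\Omega_T)}\le C$.

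The delicate points all lie in making the $H^{-1}$ steps rigorous within the time-discrete, $\delta$-regularized framework: one must check that the inverse-Laplacian test functions belong, uniformly in $\tau$ and $\delta$, to the spaces in which \eqref{ap5}--\eqref{ap6} and \eqref{aux3}--\eqref{aux4} are posed; handle the discrete time derivative correctly via convexity of $\|\cdot\|_{H^{-1}(\Omega)}^2$; and, above all, control the boundary term $\|E^0-\overline{E^0}\|_{H^{-1}(\Omega)}^2$ together with its mass analogue, which are not finite for merely $L^1$ initial data — this forces working with suitably regularized initial data, still converging in $L^1$ with uniformly bounded entropy, so that \eqref{bound.1}--\eqref{bound.4} and the later passages to the limit are unaffected. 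A further subtlety is that the $\delta$-terms couple the estimates: pairing $\delta\Delta(\theta^3/3)$ with the test function in the $H^{-1}$ estimate contributes a multiple of $\delta\int_0^t\int_\Omega\theta_\tau^3(E_\tau-\overline{E_\tau})\,dx\,ds$, hence a $\delta\int_{\Omega_T}(\theta_\tau^4+\rho_\tau\theta_\tau^4)$ contribution, so the first and third steps have to be closed simultaneously, e.g.\ by a bootstrap or an absorption argument.
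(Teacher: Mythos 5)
Your main line of attack is the paper's: test the two discrete equations with the inverse Neumann Laplacian of $\rho_\tau-\fint_\Omega\rho_\tau dx$ and of $E_\tau-\fint_\Omega E_\tau dx$, handle the discrete time derivative by telescoping/convexity of the $H^{-1}$ norm, expand the product $(\theta+\tfrac52\rho\theta^2)(\theta+\tfrac32\rho\theta)$ to read off the cases $(1,2),(1,3),(2,3)$ and the $L^2$ bound for $\theta_\tau$, get $(2,1)$ from the mass equation, and close $(2,2)$ and $(\tfrac32,3)$ by the same two pointwise Young inequalities the paper uses. Your remark about the initial data is also fair: for $E^0\in L^1(\Omega)$ in three dimensions one does not have $\na\Psi_2(0)\in L^2(\Omega)$, and the paper is silent on this point. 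However, your third step is both unnecessary and flawed. The bound $\delta^{1/4}\|\theta_\tau\|_{L^4(\Omega_T)}\le C$ falls out of the \emph{same} $H^{-1}$ energy estimate: the artificial heat flux contributes $\tfrac{\delta}{3}\int_0^T\int_\Omega\theta_\tau^3(E_\tau-\fint_\Omega E_\tau dx)\,dxdt$, and since $E_\tau=\theta_\tau(1+\tfrac32\rho_\tau)\ge\theta_\tau$, the piece $\tfrac{\delta}{3}\int\theta_\tau^3E_\tau\ge\tfrac{\delta}{3}\int\theta_\tau^4$ has a \emph{good} sign and is precisely the desired quantity, while the piece involving $\fint_\Omega E_\tau dx$ (controlled by the energy balance) is absorbed into $\delta\int\theta_\tau^4$ by Young. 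You instead treat this term as an obstruction requiring a bootstrap and propose testing with $\psi_2=\theta^k$, $\psi_1=\tfrac34(\theta^k)^2$. The convex function $(\rho,E)\mapsto\tfrac12E\theta=\tfrac12E^2/(1+\tfrac32\rho)$ you invoke has $\rho$-derivative $-\tfrac34\theta^2$, not $+\tfrac34\theta^2$, so the sign of your $\psi_1$ is inconsistent with the convexity argument, and with either sign it is not clear that the combined spatial cross terms close or can be absorbed by the entropy dissipation of \eqref{ei2}.

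A second gap is your assertion that the $\delta$-contribution in the mass estimate is "bounded by \eqref{bound.4}". The term $\delta\int_0^T\int_\Omega\phi_\tau\rho_\tau\,dxdt$ cannot be estimated by Cauchy--Schwarz from $\sqrt{\delta}\|\phi_\tau\|_{L^2(0,T;H^1(\Omega))}\le C$, because at this stage $\rho_\tau$ is only known to be bounded in $L^\infty(0,T;L^1(\Omega))$ (the $L^2$-in-space information on $\rho_\tau$ is exactly what the lemma is producing). The paper's proof writes $\rho\phi=\theta^{3/2}\cdot(\rho\theta^{-3/2})\log(\rho\theta^{-3/2})+\tfrac52\rho$, uses that $z\mapsto z\log z$ is bounded from below to obtain $-\delta\int\phi_\tau\rho_\tau\le \delta C\int\theta_\tau^{3/2}+C$, absorbs the resulting $\delta\int\theta_\tau^{3/2}$ into the $\int\theta_\tau^2$ term on the left-hand side, and controls the remaining piece $\delta\int\phi_\tau\Psi_1$ by a Gronwall argument in $\int|\na\Psi_1|^2$. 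Without some such device your mass-equation step does not close.
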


\begin{proof}
The proof is based on the $H^{-1}(\Omega)$ method, i.e., we use test functions
of the type $(-\Delta)^{-1}\rho_\tau$ and $(-\Delta)^{-1}E_\tau$. More precisely,
let $\Psi_1$, $\Psi_2\in L^\infty(0,T;H^1(\Omega))$ be the unique solutions to,
respectively,
\begin{equation}\label{Psi}
\begin{aligned}
  -\Delta\Psi_1 &= \rho_\tau - \fint_\Omega\rho_\tau dx\quad\mbox{on }\Omega,
	\quad\na\Psi_1\cdot\nu=0\quad\mbox{on }\pa\Omega ,\quad \int_\Omega\Psi_1 dx = 0,\\
  -\Delta\Psi_2 &= E_\tau - \fint_\Omega E_\tau dx
  \quad\mbox{on }\Omega,\quad \na\Psi_2\cdot\nu=0\quad\mbox{on }\pa\Omega ,
	\quad\int_\Omega\Psi_2 dx = 0,
\end{aligned}
\end{equation}
where $\fint udx = \operatorname{meas}(\Omega)^{-1}\int_\Omega udx$.

{\em Step 1: uniform bounds for $\Psi_2$.} We use the test function $\Psi_2$
in the weak formulation of the second equation in \eqref{Psi} and take into
account the energy control. Then
$$
  \|\na\Psi_2\|_{L^2(\Omega)}^2 \le C\big(1+\|E_\tau\|_{L^{6/5}(\Omega)}\big)
	\|\Psi_2\|_{L^6(\Omega)}.
$$
It follows from Sobolev's embedding and the Poincar\'e--Wirtinger inequality
that
$$
  \|\na\Psi_2\|_{L^2(\Omega)}^2 \le C\big(1+\|E_\tau\|_{L^{6/5}(\Omega)}\big)
	\|\na\Psi_2\|_{L^2(\Omega)}
$$
and so
$$
  \|\Psi_2\|_{H^1(\Omega)}\le  C\big(1+\|E_\tau\|_{L^{6/5}(\Omega)}\big).
$$

We proceed by bootstrapping this result. Elliptic regularity for
$$
  -\Delta\Psi_2 + \Psi_2 = E_\tau - \fint_\Omega E_\tau dx + \Psi_2
	\quad\mbox{in }\Omega
$$
gives (here, we need the boundary regularity $\pa\Omega\in C^{1,1}$)
$$
  \|\Psi_2\|_{W^{2,6/5}(\Omega)}
	\le C\big(1+\|E_\tau\|_{L^{6/5}(\Omega)} + \|\Psi_2\|_{L^{6/5}(\Omega)}\big)
	\le C\big(1+\|E_\tau\|_{L^{6/5}(\Omega)}\big).
$$
Since $(E_\tau)$ is bounded in $L^{\infty}(0,T;L^1(\Omega))$, 
an interpolation shows that
\begin{align*}
  \|E_\tau\|_{L^6(0,T;L^{6/5}(\Omega))}^6
	&\le \int_0^T\|E_\tau\|_{L^2(\Omega)}^2\|E_\tau\|_{L^1(\Omega)}^4 dt \\
	&\le \|E_\tau\|_{L^\infty(0,T;L^1(\Omega))}^4\int_0^T\|E_\tau\|_{L^2(\Omega)}^2 dt.
\end{align*}
We deduce from the embedding $L^6(0,T;W^{2,6/5}(\Omega))\hookrightarrow
L^6(\Omega_T)$ that
\begin{equation}\label{Psi2}
  \|\Psi_2\|_{L^6(\Omega_T)} \le C\big(1+\|E_\tau\|_{L^2(\Omega_T)}^{1/3}\big).
\end{equation}

{\em Step 2: Test functions $\Psi_1$ and $\Psi_2$.}
We choose $\Psi_1$ and $\Psi_2$ as test functions in \eqref{ap5} and \eqref{ap6},
respectively:
\begin{align}
  0 &= \frac{1}{\tau}\int_0^T\int_\Omega(\rho_\tau - \pi_\tau\rho_\tau)\Psi_1 dx dt
  + \int_0^T\int_\Omega \rho_\tau\theta_\tau\bigg(\rho_\tau
	- \fint_\Omega\rho_\tau dx\bigg)dxdt \label{ap5.1} \\
  &\phantom{xx}{} + \delta\int_0^T\int_\Omega \phi_\tau\bigg(\rho_\tau
	- \fint_\Omega\rho_\tau dx + \Psi_1\bigg) dxdt, \nonumber \\
  0 &= \frac{1}{\tau}\int_0^T\int_\Omega(E_\tau - \pi_\tau E_\tau)\Psi_2 dx dt
  + \int_0^T\int_\Omega\bigg(\theta_\tau+\frac{5}{2}\rho_\tau\theta_\tau^2\bigg)
  \bigg(E_\tau - \fint_\Omega E_\tau dx \bigg)dxdt \label{ap6.1} \\
  &\phantom{xx}{}+ \frac{\delta}{3}\int_0^T\int_\Omega\theta_\tau^3 
	\bigg(E_\tau - \fint_\Omega E_\tau dx \bigg) dxdt
  + \delta\int_0^T\int_\Omega \theta_\tau^{-N}\log(\theta_\tau)\Psi_2 dx dt.
  \nonumber
\end{align}

We estimate the first integral in \eqref{ap5.1}. Since $\Psi_1$ has zero 
spatial average and $\na\Psi_1\cdot\nu=0$ on $\pa\Omega$, it follows from
\eqref{Psi} that
\begin{align*}
  \frac{1}{\tau}\int_0^T\int_\Omega(\rho_\tau -\pi_\tau\rho_\tau)\Psi_1 dx dt
	&= \frac{1}{\tau}\int_0^T\int_\Omega(\mbox{id} - \pi_\tau)
	\bigg(\rho_\tau-\fint_\Omega\rho_\tau dx\bigg)\Psi_1 dx dt \\
  &\phantom{xx}{}+ \frac{1}{\tau}\int_0^T(\mbox{id} - \pi_\tau)
	\bigg(\fint_\Omega\rho_\tau dx\bigg)\bigg(\int_\Omega\Psi_1 dx\bigg) dt \nonumber \\
  &= \frac{1}{\tau}\int_0^T\int_\Omega \nabla\big((\mbox{id} - \pi_\tau)\Psi_1\big)
	\cdot \nabla\Psi_1 dx dt. 
\end{align*}
The function $\Psi_1$ is piecewise constant in time. We write
$\Psi_1(x,t)=\Psi_1^k(x)$ for $x\in\Omega$, $t\in((k-1)\tau,k\tau]$. Then, using
Young's inequality,
\begin{align*}
  \frac{1}{\tau}\int_0^T&\int_\Omega \nabla\big((\mbox{id} - \pi_\tau)\Psi_1\big)
	\cdot \nabla\Psi_1 dx dt
	= \sum_{k=1}^N\int_\Omega\na(\Psi_1^k-\Psi_1^{k-1})\cdot\na\Psi_1^k dx \\
	&\ge \frac12\sum_{k=1}^N\int_\Omega\big(|\na\Psi_1^k|^2-|\na\Psi_1^{k-1}|^2\big)dx
  = \frac12\int_\Omega\big(|\na\Psi_1^N|^2-|\na\Psi_1^0|^2\big)dx.
\end{align*}
We conclude that
$$
  \frac{1}{\tau}\int_0^T\int_\Omega(\rho_\tau -\pi_\tau\rho_\tau)\Psi_1 dx dt
	\ge \frac{1}{2}\int_\Omega |\nabla\Psi_1(T)|^2 dx
  - \frac{1}{2}\int_\Omega |\nabla\Psi_1(0)|^2 dx.
$$
In a similar way, we have
$$
  \frac{1}{\tau}\int_0^T\int_\Omega(E_\tau -\pi_\tau E_\tau)\Psi_2 dx dt
  \ge \frac{1}{2}\int_\Omega |\nabla\Psi_2(T)|^2 dx
  - \frac{1}{2}\int_\Omega |\nabla\Psi_2(0)|^2 dx.
$$
Inserting these inequalities into \eqref{ap5.1} and \eqref{ap6.1}, respectively,
and adding both inequalities, we find that
\begin{align}
  \frac{1}{2} & \int_\Omega|\nabla\Psi_1(T)|^2 dx 
	+ \frac{1}{2}\int_\Omega |\nabla\Psi_2(T)|^2 dx
  + \frac{\delta}{3}\int_0^T\int_\Omega\theta_\tau^3 E_\tau dxdt \nonumber \\
  &\phantom{xx}{}+ \int_0^T\int_\Omega \rho_\tau^2\theta_\tau dxdt
  + \int_0^T\int_\Omega\bigg(\theta_\tau+\frac{5}{2}\rho_\tau\theta_\tau^2\bigg)
  \bigg(\theta_\tau+\frac{3}{2}\rho_\tau\theta_\tau\bigg) dxdt \nonumber \\
  &\leq \frac{1}{2}\int_\Omega |\nabla\Psi_1(0)|^2 dx
  + \frac{1}{2}\int_\Omega |\nabla\Psi_2(0)|^2 dx 
  - \delta\int_0^T\int_\Omega\phi_\tau\bigg(\rho_\tau 
	- \fint_\Omega\rho_\tau dx +\Psi_1\bigg) dxdt \nonumber \\
  &\phantom{xx}{}+ \frac32\int_0^T\bigg(\int_\Omega \rho_\tau\theta_\tau dx\bigg) 
	\bigg(\fint_\Omega\rho_\tau dx \bigg) dt
  + \int_0^T \int_\Omega\bigg(\theta_\tau+\frac{5}{2}\rho_\tau\theta_\tau^2
	\bigg)dx\bigg(\fint_\Omega E_\tau dx \bigg) dt \nonumber \\ 
  &\phantom{xx}{}+ \frac{\delta}{3}\int_0^T\int_\Omega\theta_\tau^3 
	\bigg(\fint_\Omega E_\tau dx \bigg) dxdt 
  - \delta\int_0^T\int_\Omega \theta_\tau^{-N}\log(\theta_\tau)\Psi_2 dx dt \nonumber \\
	&=: J_1+\cdots+J_7. \label{J17}
\end{align}

We start with the last integral. It follows from \eqref{Psi2} that
$$
  J_7 \le \delta \|\theta_\tau^{-N}\log\theta_\tau\|_{L^{6/5}(\Omega_T)}
	\|\Psi_2\|_{L^6(\Omega_T)}
  \le \delta C\|\theta_\tau^{-N}\log\theta_\tau\|_{L^{6/5}(\Omega_T)}
	\big(1+\|E_\tau\|_{L^2(\Omega_T)}^{1/3}\big).
$$
The first norm is estimated according to	
\begin{align*}
  \|\theta_\tau^{-N}\log\theta_\tau\|_{L^{6/5}(\Omega_T)}^{6/5}
	&= \int_0^T\int_\Omega\theta_\tau^{-6N/5}|\log\theta_\tau|^{6/5}dxdt \\
	&\le C + \int_0^T\int_{\Omega\cap\{\theta_\tau(t)<1\}}
	\theta_\tau^{-6N/5}|\log\theta_\tau|^{6/5}dxdt \\
  &\le C + C\int_0^T\int_{\Omega\cap\{\theta_\tau(t)<1\}}
	\theta_\tau^{-(N+1)}dxdt,
\end{align*}
where the last inequality follows from the condition $N<5$ (and hence
$6N/5<N+1$). Because of \eqref{bound.4}, this leads to
\begin{equation}\label{est.L65}
  \delta\|\theta_\tau^{-N}\log\theta_\tau\|_{L^{6/5}(\Omega_T)} 
	\le C\delta^{1/6}.
\end{equation}
Therefore, we infer that
$$
  J_7 \le \delta^{1/6}C\big(1+\|E_\tau\|_{L^2(\Omega_T)}^{1/3}\big).
$$
Since $E_\tau=\theta_\tau+\frac32\rho_\tau\theta_\tau$, the right-hand side
can be controlled (for sufficiently small $\delta>0$) by the last two integrals
on the left-hand side of \eqref{J17}.

Next, we consider the following term appearing in $J_3$:
\begin{align*}
  -\delta\int_0^T\int_\Omega\phi\rho_\tau dx dt 
	&= -\delta\int_0^T\int_\Omega\bigg(\log(\rho_\tau\theta_\tau^{-3/2}) 
	+ \frac52\bigg)\rho_\tau dx dt\\
  &\le -\delta\int_0^T\int_\Omega\theta_\tau^{3/2}\cdot\rho_\tau\theta_\tau^{-3/2}
	\log(\rho_\tau\theta_\tau^{-3/2}) dx dt + C \\
  &\le \delta C\int_0^T\int_\Omega\theta_\tau^{3/2} dx dt + C,
\end{align*}
where the last inequality follows from the fact that $z\mapsto z\log z$ is bounded
from below. Furthermore, we deduce from Lemma \ref{lem.mass},
bound \eqref{bound.4} for $\phi_\tau$, and the Poincar\'e--Wirtinger
inequality that
\begin{align*}
  \delta\int_0^T\int_\Omega\phi_\tau\bigg(\fint_\Omega\rho_\tau dx\bigg)dxdt
	&\le \delta\|\phi_\tau\|_{L^1(\Omega_T)}\|\rho_\tau\|_{L^\infty(0,T;L^1(\Omega))}
	\le C, \\
  \delta\int_0^T\int_\Omega\phi_\tau\Psi_1 dxdt
	&\le \frac{\delta}{2}\int_0^T\int_\Omega\phi_\tau^2 dxdt
	+ \frac{\delta}{2}\int_\Omega\Psi_1^2 dxdt \\
	&\le C + \delta C\int_0^T\int_\Omega|\na\Psi_1|^2 dxdt. 
\end{align*}
This shows that 
$$
  J_3 \le C + \delta C \int_0^T\int_\Omega\theta_\tau^{3/2} dx dt 
	+ \delta C\int_0^T\int_\Omega|\na\Psi_1|^2 dxdt.
$$
The first integral on the right-hand side can be controlled by the last integral on
the left-hand side of \eqref{J17}. The last integral on the right-hand side
is controlled after applying Gronwall's inequality.
The integrals $J_4$, $J_5$, and $J_6$ can
be controlled by the expressions on the left-hand side of \eqref{J17}. 
We conclude that
\begin{align*}
  \int_\Omega & \big(|\na\Psi_1(T)|^2 + |\na\Psi_2(T)|^2\big)dx \\
	&{}+ \int_0^T\int_\Omega\big(\theta_\tau^2 + \delta\theta_\tau^4
	+ \rho_\tau\theta_\tau^2(1+\theta_\tau) + \rho_\tau^2\theta_\tau(1+\theta_\tau^2)
	\big)dxdt \le C\exp(\delta CT).
\end{align*}
We deduce from this estimate and Young's inequality that
\begin{align*}
  \|\rho_\tau\theta_\tau\|_{L^2(\Omega_T)}^2
	&\le \frac12\int_0^T\int_\Omega\rho_\tau^2(\theta_\tau+\theta_\tau^3) dxdt \le C, \\
	\|\rho_\tau\theta_\tau^2\|_{L^{3/2}(\Omega_T)}^{3/2}
	&\le \frac12\int_0^T\int_\Omega(\rho_\tau+\rho_\theta^2)\theta_\tau^3 dxdt \le C.
\end{align*}
This proves the lemma.
\end{proof}

{\em Step 3: Strong convergence of $(\rho_\tau)$ and $(\theta_\tau)$.}
First, we prove a gradient bound for the particle density.

\begin{lemma}[Gradient estimate]\label{lem.grad}
There exist $N\in(0,5)$, $m\in(\frac12,1)$, and $\alpha\in(\frac23,1)$ such that
$$
  \|\rho_\tau^m\|_{L^p(0,T;W^{1,q}(\Omega))} \le C(\delta),
$$
where $C(\delta)>0$ does not depend on $\tau$, $p\ge 1/m$, and
$3q/(3-q)>1/m$ (or equivalently, $q>3/(3m+1)$). Moreover, with a constant
$C>0$ independent of $\tau$ and $\delta$,
$$
  \|E_\tau\|_{L^1(0,T;W^{1,1}(\Omega))} \le C.
$$
\end{lemma}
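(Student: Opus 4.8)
The plan is to prove the two estimates separately; the bound for $\rho_\tau^m$ relies on the entropy dissipation combined with the artificial negative-power control of $\theta_\tau$ (which is the source of the $\delta$-dependent constant), whereas the bound for $E_\tau$ follows directly from the entropy estimates and is uniform in $\delta$. For the first one, start from the pointwise identity (valid since $\rho_\tau,\theta_\tau>0$ a.e.)
\[
  \na\rho_\tau^m = 2m\,\rho_\tau^{m-1/2}\,\na\sqrt{\rho_\tau}
  = 2m\,(\rho_\tau\theta_\tau)^{m-1/2}\,\theta_\tau^{-m}\,(\sqrt{\theta_\tau}\,\na\sqrt{\rho_\tau}).
\]
Here $\sqrt{\theta_\tau}\na\sqrt{\rho_\tau}\in L^2(\Omega_T)$ uniformly in $\tau$ and $\delta$ by \eqref{bound.2}; $(\rho_\tau\theta_\tau)^{m-1/2}\in L^\infty(0,T;L^{1/(m-1/2)}(\Omega))$ uniformly in $\tau$ and $\delta$, since $\rho_\tau\theta_\tau\in L^\infty(0,T;L^1(\Omega))$ by \eqref{bound.2}; and $\theta_\tau^{-m}\in L^{(N+1)/m}(\Omega_T)$ by \eqref{bound.4}, but only with a $\delta$-dependent bound. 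This last fact is the only source of the constant $C(\delta)$, and it is harmless since the lemma is applied at a fixed $\delta>0$ in the limit $\tau\to0$.

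By Hölder's inequality the three bounds combine to $\na\rho_\tau^m\in L^p(0,T;L^q(\Omega))$ with
\[
  \frac1p = \frac12 + \frac m{N+1}, \qquad
  \frac1q = \frac12 + (m-\tfrac12) + \frac m{N+1} = \frac{m(N+2)}{N+1},
\]
that is, $p=2(N+1)/(N+1+2m)$ and $q=(N+1)/(m(N+2))$, and $\|\na\rho_\tau^m\|_{L^p(0,T;L^q(\Omega))}\le C(\delta)$. Since $q\le1/m$ and, by the (almost) conservation of mass (Lemma \ref{lem.mass}), $\rho_\tau^m\in L^\infty(0,T;L^{1/m}(\Omega))\subset L^p(0,T;L^q(\Omega))$ with a $\tau$-independent bound, this yields $\|\rho_\tau^m\|_{L^p(0,T;W^{1,q}(\Omega))}\le C(\delta)$. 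It remains to observe that $p\ge1/m$ is equivalent to $N(2m-1)\ge1$, and that $3q/(3-q)>1/m$ (equivalently $q>3/(3m+1)$) is equivalent to $N>3m-1$; both, together with the constraints $N<5$ and $m\in(1/2,1)$, are satisfied on a nonempty set of pairs $(m,N)$ --- for instance $m=\tfrac45$, $N=\tfrac95$ --- within which one also fixes the auxiliary exponent $\alpha\in(2/3,1)$ needed (via an analogous estimate) for the div--curl/renormalization argument of the following step.

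For $E_\tau=\theta_\tau+\tfrac32\rho_\tau\theta_\tau$ no artificial terms are involved. We have
\[
  \na E_\tau = \theta_\tau\,\na\log\theta_\tau + 3\sqrt{\rho_\tau\theta_\tau}\,\na\sqrt{\rho_\tau\theta_\tau},
\]
and the Cauchy--Schwarz inequality gives
\[
  \|\na E_\tau\|_{L^1(\Omega_T)}
  \le \|\theta_\tau\|_{L^2(\Omega_T)}\|\na\log\theta_\tau\|_{L^2(\Omega_T)}
  + 3\|\sqrt{\rho_\tau\theta_\tau}\|_{L^2(\Omega_T)}\|\na\sqrt{\rho_\tau\theta_\tau}\|_{L^2(\Omega_T)}.
\]
All norms on the right-hand side are bounded independently of $\tau$ and $\delta$ by \eqref{bound.2}--\eqref{bound.3} and Lemma \ref{lem.higher} (using in particular $\|\sqrt{\rho_\tau\theta_\tau}\|_{L^2(\Omega_T)}^2=\|\rho_\tau\theta_\tau\|_{L^1(\Omega_T)}$), and $E_\tau\in L^\infty(0,T;L^1(\Omega))\subset L^1(\Omega_T)$ by \eqref{bound.1}--\eqref{bound.2}. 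Hence $\|E_\tau\|_{L^1(0,T;W^{1,1}(\Omega))}\le C$ with $C$ independent of $\tau$ and $\delta$.

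The main obstacle is the bound for $\rho_\tau^m$. The degeneracy at $\theta_\tau=0$ leaves only $\sqrt{\theta_\tau}\na\sqrt{\rho_\tau}$ under control --- not $\na\sqrt{\rho_\tau}$ or $\na\rho_\tau$ --- so reconstructing $\na\rho_\tau^m$ forces in the factor $\theta_\tau^{-m}$, whose integrability is purchased only through the artificial term $\theta_\tau^{-N}\log\theta_\tau$ and at a $\delta$-dependent price. One then has to balance the poor integrability of $\rho_\tau$ (barely above $L^1$), the limited negative-power integrability of $\theta_\tau$ (constrained by $N<5$), and the target relation $3q/(3-q)>1/m$ required so that the subsequent compactness argument produces --- via Sobolev's embedding --- a space strictly better than $L^1(\Omega_T)$ for $\rho_\tau$; checking that the admissible windows for $N$, $m$ and $\alpha$ overlap is the decisive point.
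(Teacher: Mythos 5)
Your proof is correct, and for the density bound it takes a genuinely different route from the paper's. The paper first shows that $\rho_\tau=(\rho_\tau\theta_\tau^{1/2})\,\theta_\tau^{-1/2}$ is bounded in $L^r(\Omega_T)$ with $r=2(N+1)/(N+2)$, interpolates this against the $L^\infty(0,T;L^1(\Omega))$ bound with an exponent $\alpha$, and then writes $\na\rho_\tau^m=m\rho_\tau^m\na\log\rho_\tau$, using the $\delta$-weighted $L^2(\Omega_T)$ bound on $\na\log\rho_\tau=\na\phi_\tau+\frac32\na\log\theta_\tau$; the $\delta$-dependence thus enters both through $\theta_\tau^{-(N+1)}$ and through $\na\phi_\tau$, and $\alpha$ is a genuine interpolation parameter whose admissible window must be intersected with the constraints on $m$ and $N$. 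You instead factor $\na\rho_\tau^m=2m(\rho_\tau\theta_\tau)^{m-1/2}\theta_\tau^{-m}(\sqrt{\theta_\tau}\na\sqrt{\rho_\tau})$ and apply a single three-factor H\"older inequality, so that the only $\delta$-dependent ingredient is the $L^1(\Omega_T)$ bound on $\theta_\tau^{-(N+1)}$ from \eqref{bound.4}; this yields different exponents $p=2(N+1)/(N+1+2m)$, $q=(N+1)/(m(N+2))$, and your reductions $p\ge 1/m\Leftrightarrow N(2m-1)\ge 1$ and $q>3/(3m+1)\Leftrightarrow N>3m-1$, together with the sample choice $m=\frac45$, $N=\frac95$ (which also satisfies $N<5$), are correct, as is the lower-order bound via $L^\infty(0,T;L^{1/m}(\Omega))$. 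Dispensing with $\alpha$ is legitimate, since $\alpha$ occurs only in the existential clause of the lemma and is never used afterwards, so it may be fixed arbitrarily in $(\frac23,1)$; your parenthetical suggesting it is needed later for the div--curl step is inaccurate but harmless. The $E_\tau$ estimate coincides with the paper's (Cauchy--Schwarz on $\theta_\tau\na\log\theta_\tau$ and $\sqrt{\rho_\tau\theta_\tau}\na\sqrt{\rho_\tau\theta_\tau}$, all factors controlled uniformly by \eqref{bound.2}--\eqref{bound.3} and Lemma \ref{lem.higher}). One cosmetic remark: your sample choice gives $q=\frac{35}{38}<1$; the paper's own exponents also satisfy $q<1$, so this is consistent with how the lemma is used, but your decomposition in fact admits $q>1$ (e.g.\ $m=\frac{7}{10}$ and $N$ close to $5$), which the paper's does not.
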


The condition $q>3/(3m+1)$ guarantees that $W^{1,q}(\Omega)\hookrightarrow
L^{1/m}(\Omega)$. This is needed below for the application of the 
nonlinear Aubin--Lions lemma.

\begin{proof}
It follows from Lemma \ref{lem.higher} that $(\rho_\tau\theta_\tau^{1/2})$
is bounded in $L^2(\Omega_T)$, while estimate \eqref{bound.4} implies that
$(\theta_\tau^{-1/2})$ is bounded in $L^{2(N+1)}(\Omega_T)$. Consequently,
$\rho_\tau = \rho_\tau\theta^{1/2}\theta_\tau^{-1/2}$ is uniformly bounded in
$L^{r}(\Omega_T)$, where $r:=2(N+1)/(N+2)>1$. 
Together with the $L^\infty(0,T;L^1(\Omega))$
bound for $(\rho_\tau)$, an interpolation with $1/c=(1-\alpha)/1 + \alpha/r$ 
and $b\ge 1$ gives
$$
  \|\rho_\tau\|_{L^b(0,T;L^c(\Omega))}^b
	\le \|\rho_\tau\|_{L^\infty(0,T;L^1(\Omega))}^{(1-\alpha)b}
	\int_0^T\|\rho_\tau\|_{L^r(\Omega)}^{\alpha b}dt 
	\le C\int_0^T\|\rho_\tau\|_{L^r(\Omega)}^{\alpha b}dt.
$$
A simple computation shows that $c=r/(\alpha+(1-\alpha)r)$. 
We choose $b=r/\alpha$ and use the $L^r(\Omega_T)$ bound for $(\rho_\tau)$:
$$
  \|\rho_\tau\|_{L^{r/\alpha}(0,T;L^{r/(\alpha+(1-\alpha)r)}(\Omega))} \le C
	\quad\mbox{for } r=\frac{2(N+1)}{N+2},\ \alpha\in(0,1).
$$
Let $\frac12<m<1$. Then
$$
  \|\rho_\tau^m\|_{L^{r/(\alpha m)}(0,T;L^{r/(m(\alpha+(1-\alpha)r))}(\Omega))}\le C.
$$
We know from \eqref{bound.3} and \eqref{bound.4} that
$\na\log\rho_\tau = \na\phi_\tau + \frac32\na\log\theta_\tau$ is uniformly bounded in
$L^2(\Omega_T)$ (but not uniformly in $\delta$). It follows that
$\na\rho_\tau^m = m\rho_\tau^m\na\log\rho_\tau$ is uniformly bounded in
$L^p(0,T;L^q(\Omega))$, where $p,q\ge 1$ satisfy 
\begin{equation}\label{pq}
  \frac{1}{p} = \frac12 + \frac{\alpha m}{r}, \quad
	\frac{1}{q} = \frac12 + \frac{m}{r}(\alpha+(1-\alpha)r).
\end{equation} 
We deduce from the Poincar\'e--Wirtinger inequality and the 
$L^\infty(0,T;L^1(\Omega))$ bound for $(\rho_\tau)$ that
$$
  \|\rho_\tau^m\|_{L^p(0,T;L^q(\Omega))} 
	\le C\|\na\rho_\tau^m\|_{L^p(0,T;L^q(\Omega))} 
	+ C\|\rho_\tau^m\|_{L^p(0,T;L^1(\Omega))} \le C(\delta).
$$

We claim that there exist $N\in(0,5)$, $m\in(\frac12,1)$, and $\alpha\in(0,1)$
such that 
$$
  p\ge\frac{1}{m}, \quad \frac{3q}{3-q} > \frac{1}{m},
$$
where $p$ and $q$ are given by \eqref{pq}.
A straightforward computation shows that these inequalities are equivalent to
$$
  r\ge \frac{2\alpha m}{2m-1}, \quad \frac{r}{r-1} < 6\alpha m.
$$
We choose $r=2\alpha m/(2m-1)$ (recall that $m>1/2$) such that the first inequality 
is satisfied. With this choice, the second inequality is equivalent to 
$m<1/(3(1-\alpha))$. Since we want $m<1$, we need to choose 
$\alpha>2/3$. Then $\frac12<m<1<1/(3(1-\alpha))$. By definition of $r$,
\begin{equation}\label{r}
  \frac{2(N+1)}{N+2} = r = \frac{2\alpha m}{2m-1}.
\end{equation}
Thus, it remains to prove that $N\in(0,5)$ can be chosen such that
this identity holds for some $\alpha>\frac23$ and $m\in(\frac12,1)$. 
Equation \eqref{r} is equivalent to
$$
  N = -\frac{2\alpha m-2m+1}{\alpha m-2m+1},
$$
and the requirement $N<5$ gives $m>6/(12-7\alpha)$. The right-hand side is
smaller than one if $\alpha<\frac67$. This is compatible with the previous constraint
$\alpha>\frac23$ and proves the claim.

To finish the proof of the lemma, we observe that
\eqref{bound.2} and Lemma \ref{lem.higher} imply that
\begin{equation}\label{rhoth}
  \|\na(\rho_\tau\theta_\tau)\|_{L^{4/3}(\Omega_T)}
	\le 2\|\sqrt{\rho_\tau\theta_\tau}\|_{L^4(\Omega_T)}
	\|\na\sqrt{\rho_\tau\theta_\tau}\|_{L^2(\Omega_T)} \le C.
\end{equation}
Moreover, we deduce from \eqref{bound.3} and Lemma \ref{lem.higher} that
$$
  \|\na\theta_\tau\|_{L^1(\Omega_T)}
	\le \|\theta_\tau\|_{L^2(\Omega_T)}\|\na\log\theta_\tau\|_{L^2(\Omega_T)} \le C.
$$
Thus, $(E_\tau)$ is bounded in $L^1(0,T;W^{1,1}(\Omega))$, and the proof is finished.
\end{proof}

\begin{lemma}[Bounds for the discrete time derivative]\label{lem.time}
There exists a constant $C>0$ which does not depend on $\tau$ such that
$$
  \tau^{-1}\|\rho_\tau-\pi_\tau\rho_\tau\|_{L^{4/3}(0,T;W^{1,4}(\Omega)')} \le C,
	\quad \tau^{-1}\|E_\tau-\pi_\tau E_\tau\|_{L^{6/5}(0,T;W^{2,4}(\Omega)')} \le C.
$$
\end{lemma}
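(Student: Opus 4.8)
The plan is to read off $\tau^{-1}$ times the discrete time increments from the weak formulations \eqref{aux3}--\eqref{aux4}, identifying them with the negatives of the diffusion and regularization terms, to bound those terms uniformly in $\tau$ by the a priori estimates already established, and to conclude by duality, using that a linear form on $L^{p}(0,T;X)$ bounded by the $L^{p}(0,T;X)$-norm represents an element of $L^{p'}(0,T;X')$.

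\textbf{Mass equation.} First I would fix $k$ and insert an arbitrary time-independent $\psi_1\in W^{1,4}(\Omega)\subset W^{1,3}(\Omega)$ into \eqref{aux3}; H\"older's inequality then gives
\[
  \|\tau^{-1}(\rho^k-\rho^{k-1})\|_{W^{1,4}(\Omega)'}
  \le \|\na(\rho^k\theta^k)\|_{L^{4/3}(\Omega)}
  + \delta\|\na\phi^k\|_{L^{4/3}(\Omega)} + \delta\|\phi^k\|_{L^{4/3}(\Omega)}.
\]
Multiplying by $\tau$, raising to the power $4/3$ and summing over $k$, I would use that $\sum_k\tau\|\na(\rho^k\theta^k)\|_{L^{4/3}(\Omega)}^{4/3}=\|\na(\rho_\tau\theta_\tau)\|_{L^{4/3}(\Omega_T)}^{4/3}\le C$ by \eqref{rhoth}, while the $\delta$-contributions are controlled via \eqref{bound.4}, the embedding $L^2(\Omega_T)\hookrightarrow L^{4/3}(\Omega_T)$ and H\"older in time (which even leaves a positive power of $\delta$). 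This gives the first estimate, with $C$ independent of $\tau$ (and of $\delta$).

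\textbf{Energy equation.} I would argue analogously from \eqref{aux4}, testing with $\psi_2\in W^{2,4}(\Omega)$. The key structural point is the Sobolev embedding $W^{2,4}(\Omega)\hookrightarrow W^{1,\infty}(\Omega)$ in three space dimensions, which controls $\|\psi_2\|_{L^\infty}$, $\|\na\psi_2\|_{L^\infty}$ and $\|\Delta\psi_2\|_{L^4}$; this is exactly why the dual of $W^{2,4}(\Omega)$, and not of a first-order space, appears. Integrating by parts the two diffusion terms in \eqref{aux4} (taking $\psi_2$ with $\pa_\nu\psi_2=0$ on $\pa\Omega$; at the fixed-$\tau$ level $\phi^k,w^k\in H^2(\Omega)$, so $\theta^k,\rho^k$ and the relevant products are in $W^{1,6}(\Omega)$ and the manipulation is legitimate), the resulting terms $(\theta^k+\tfrac52\rho^k(\theta^k)^2)\Delta\psi_2$, $\tfrac{\delta}{3}(\theta^k)^3\Delta\psi_2$ and $\delta(\theta^k)^{-N}\log(\theta^k)\psi_2$ would be estimated by, respectively,
\[
  \|\theta_\tau+\tfrac52\rho_\tau\theta_\tau^2\|_{L^{3/2}(\Omega_T)}\|\Delta\psi_2\|_{L^3(\Omega_T)},\quad
  \delta\|\theta_\tau\|_{L^4(\Omega_T)}^3\|\Delta\psi_2\|_{L^4(\Omega_T)},\quad
  \delta\|\theta_\tau^{-N}\log\theta_\tau\|_{L^{6/5}(\Omega_T)}\|\psi_2\|_{L^6(\Omega_T)},
\]
where Lemma \ref{lem.higher} provides $\theta_\tau\in L^2(\Omega_T)\hookrightarrow L^{3/2}(\Omega_T)$, $\rho_\tau\theta_\tau^2\in L^{3/2}(\Omega_T)$ and $\delta\|\theta_\tau\|_{L^4(\Omega_T)}^3\le C\delta^{1/4}$, and \eqref{est.L65} provides $\delta\|\theta_\tau^{-N}\log\theta_\tau\|_{L^{6/5}(\Omega_T)}\le C\delta^{1/6}$. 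Each right-hand side is $\le C\|\psi_2\|_{L^6(0,T;W^{2,4}(\Omega))}$, so $\tau^{-1}(E_\tau-\pi_\tau E_\tau)$ lies in $(L^6(0,T;W^{2,4}(\Omega)))'=L^{6/5}(0,T;W^{2,4}(\Omega)')$ with norm bounded uniformly in $\tau$.

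\textbf{Main obstacle.} The only real work is exponent bookkeeping: one must verify that each diffusion and regularization term is dominated by $\|\psi_1\|_{L^4(0,T;W^{1,4})}$ resp.\ $\|\psi_2\|_{L^6(0,T;W^{2,4})}$ with a constant independent of $\tau$. The lowest-regularity contributions — the temperature gradient, which is controlled only in $L^1(\Omega_T)$ unless it is moved onto $\Delta\psi_2$, and the singular source $\delta\theta_\tau^{-N}\log\theta_\tau$ — are precisely the ones pinning down the time exponent $6/5$ in the energy estimate and forcing the simultaneous use of $W^{2,4}\hookrightarrow W^{1,\infty}$ and of the $\delta$-weighted bounds \eqref{bound.4} and \eqref{est.L65}. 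I expect this, together with the routine but not entirely automatic check that the integration by parts leaves no uncontrolled boundary contribution, to be the most delicate point; everything else is a direct application of the estimates in Lemmas \ref{lem.mass}--\ref{lem.grad}.
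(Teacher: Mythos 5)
Your proposal is correct and essentially coincides with the paper's proof: both estimate the discrete time increments by duality from the approximate weak formulations, pairing $\na(\rho_\tau\theta_\tau)$ and $\delta\phi_\tau$ with $\na\psi_1$ via \eqref{rhoth} and \eqref{bound.4}, and moving the Laplacian onto $\psi_2$ in the energy equation so as to use Lemma \ref{lem.higher} and \eqref{est.L65}, with the exponents $4/3$ and $6/5$ arising exactly as you describe. The only difference is presentational: you spell out the H\"older pairings and flag the boundary term in the integration by parts, which the paper passes over silently.
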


\begin{proof}
We infer from \eqref{rhoth} and \eqref{bound.4} that
\begin{align*}
  \tau^{-1}\|\rho_\tau-\pi_\tau\rho_\tau\|_{L^{4/3}(0,T;W^{1,4}(\Omega)')}
	&= \sup_{\|\psi_1\|_{L^4(0,T;W^{1,4}(\Omega))}=1}
	\bigg|\tau^{-1}\int_0^T\int_\Omega(\rho_\tau-\pi_\tau\rho_\tau)\psi_1 dxdt\bigg| \\
	&\le \frac32\|\na(\rho_\tau\theta_\tau)\|_{L^{4/3}(\Omega_T)}
	+ \delta\|\phi_\tau\|_{L^{4/3}(\Omega_T)} \le C.
\end{align*}
Furthermore,
\begin{align*}
  \tau^{-1}\|E_\tau-\pi_\tau E_\tau\|_{L^{6/5}(0,T;W^{2,4}(\Omega)')}
	&\le \|\theta_\tau\|_{L^2(\Omega_T)}
	+ \frac{15}{4}\|\rho_\tau\theta_\tau^2\|_{L^{3/2}(\Omega_T)} \\
	&\phantom{xx}{}+ \frac{\delta}{3}\|\theta_\tau^3\|_{L^{4/3}(\Omega_T)}
	+ \delta\|\theta_\tau^{-N}\log\theta_\tau\|_{L^{6/5}(\Omega_T)}.
\end{align*}
Taking into account Lemma \ref{lem.higher}, the first three terms on the 
right-hand side are uniformly bounded. Since $N<5$, the last term can be estimated
from above by $\delta\|\theta_\tau^{-(N+1)}\|_{L^1(\Omega_T)}^{5/6}$ which is bounded
because of \eqref{bound.4}. This finishes the proof.
\end{proof}

Lemmas \ref{lem.grad} and \ref{lem.time} allow us to apply the
Aubin--Lions lemma in the version of \cite[Theorem 3]{CJL14}.
This is possible since $p\ge 1/m$ and $W^{1,q}(\Omega)\hookrightarrow
L^{1/m}(\Omega)$ (the last fact is a consequence of $q>3/(3m+1)$). 
We infer the existence of a subsequence which is not relabeled such that,
as $\tau\to 0$,
$$
  \rho_\tau\to \rho\quad\mbox{strongly in }L^1(\Omega_T).
$$
Concerning $(E_\tau)$, Lemmas \ref{lem.grad} and \ref{lem.time} allow us 
to apply the Aubin--Lions lemma in the version of \cite{DrJu12}
(or Theorem 3 in \cite{CJL14} with $m=1$) to obtain a subsequence of
$(E_\tau)$ (not relabeled) such that, as $\tau\to 0$,
$$
  E_\tau\to E\quad\mbox{strongly in }L^1(\Omega_T).
$$
In fact, because of the $L^2(\Omega_T)$ bound for $(E_\tau)$ from Lemma
\ref{lem.higher}, this convergence holds in $L^\eta(\Omega_T)$ for any $\eta<2$.
Up to subsequences, we know that $\rho_\tau\to\rho$ and $E_\tau\to E$ a.e.\
in $\Omega_T$. Thus,
$$
  \theta_\tau = \frac{E_\tau}{1+3\rho_\tau/2} \to \frac{E}{1+3\rho/2} =: \theta
  \quad\mbox{a.e. in }\Omega_T.
$$
In particular, $E=\theta+\frac32\rho\theta$. 
The bound for $(\theta_\tau)$ in $L^4(\Omega_T)$ (not uniform in $\delta$)
shows that the previous convergence holds in $L^\eta(\Omega_T)$ for any $\eta<4$. 
We deduce from the $L^2(\Omega_T)$ bounds for $\log\theta_\tau$ and
$\log\rho_\tau=\phi_\tau+\frac32\log\theta_\tau-\frac52$ that $\log\theta$ and
$\log\rho$ are integrable and thus, $\rho>0$, $\theta>0$ a.e.\ in $\Omega_T$.
Furthermore, $\phi_\tau\to\log\rho-\frac32\log\theta+\frac52=:\phi$ a.e.\ in $\Omega_T$
and, because of \eqref{bound.4}, weakly in $L^2(0,T;H^1(\Omega))$.

The previous bounds and the strong convergences of $(\rho_\tau)$ and $(\theta_\tau)$
allow us to pass to the limit $\tau\to 0$ in \eqref{ap5}--\eqref{ap6}. For this,
we observe that, by \eqref{rhoth},
$$
  \na(\rho_\tau\theta_\tau)\rightharpoonup\na(\rho\theta)\quad
	\mbox{weakly in }L^{4/3}(\Omega_T).
$$
Furthermore, by Lemma \ref{lem.higher},
$$
  \rho_\tau\theta_\tau^2\to \rho\theta^2\quad\mbox{strongly in }L^\eta(\Omega_T),\
	\eta<\frac32.
$$
The strong convergence of $(\theta_\tau)$ to $\theta$, the uniform bounds on
$(\theta_\tau)$, and the a.e.\ positivity of $\theta$ imply that
$$
  \theta_\tau^3\to\theta^3,\quad
	\theta_\tau^{-N}\log\theta_\tau \to \theta^{-N}\log\theta
	\quad\mbox{strongly in }L^1(\Omega_T).
$$
Finally, by Lemma \ref{lem.time},
\begin{align*}
  \tau^{-1}(\rho_\tau-\pi_\tau\rho_\tau)\rightharpoonup \pa_t\rho 
	&\quad\mbox{weakly in }L^{4/3}(0,T;W^{1,4}(\Omega)'), \\
	\tau^{-1}(E_\tau-\pi_\tau E_\tau)\rightharpoonup \pa_t E 
	&\quad\mbox{weakly in }L^{6/5}(0,T;W^{2,4}(\Omega)').
\end{align*}
Then \eqref{ap5}--\eqref{ap6} become in the limit $\tau\to 0$,
\begin{align}
  0 &= \int_0^T\langle\pa_t\rho,\psi_1\rangle dt
	+ \int_0^T\int_\Omega\na(\rho\theta)\cdot\na\psi_1 dxdt
	+ \delta\int_0^T\int_\Omega(\na\phi\cdot\na\psi_1+\phi\psi_1)dxdt, \label{ap7} \\
  0 &= \int_0^T\langle\pa_t E,\psi_2\rangle dt
	- \int_0^T\int_\Omega\bigg(\theta+\frac{5}{2}\rho\theta^2\bigg)\Delta\psi_2 dxdt 
	\label{ap8}\\
	&\phantom{xx}{}- \frac{\delta}{3}\int_0^T\int_\Omega\theta^3\Delta\psi_2 dxdt
	+ \delta\int_0^T\int_\Omega\theta^{-N}\log(\theta)\psi_2 dxdt \nonumber
\end{align}
for any test functions $\psi_1$, $\psi_2\in C_0^2(\Omega_T)$.


\subsection{Limit $\delta\to 0$}

In this subsection, we need some tools from mathematical fluid dynamics,
in particular the concept of renormalized solutions and the div-curl lemma.
In the following, we denote by $\overline{u_\delta}$ the weak or distributional
limit of a sequence $(u_\delta)$ whenever it exists. Let $(\rho_\delta,
E_\delta)$ be a weak solution to \eqref{ap7}--\eqref{ap8} and set
$\phi_\delta=\log(\rho_\delta/\theta_\delta^{3/2})+\frac52$,
$E_\delta=\theta_\delta+\frac32\rho_\delta\theta_\delta$.

{\em Step 1: Renormalized mass balance equation.}
We compute the renormalized form of \eqref{ap7}. Let $f\in C^2([0,\infty))
\cap L^\infty(0,\infty)$ satisfy $|f'(s)|\le C(1+s)^{-1}$ and 
$|f''(s)|\le C(1+s)^{-2}$ for $s\ge 0$. Furthermore, let $\xi\in C_0^\infty(\Omega_T)$.
Choosing $\psi_1=f'(\rho_\delta)\xi$ in \eqref{ap7}, we find that
\begin{align*}
  \int_0^T & \langle\pa_t f(\rho_\delta), \xi\rangle dt
  + \int_0^T\int_\Omega\big( f'(\rho_\delta)\na(\rho_\delta\theta_\delta)
  + \delta f'(\rho_\delta)\na\phi_\delta \big) \cdot\na\xi dx dt\\
  &= - \int_0^T\int_\Omega \big(\delta f'(\rho_\delta) \phi_\delta
	+ f''(\rho_\delta)\nabla\rho_\delta\cdot\nabla(\rho_\delta\theta_\delta)
  + \delta f''(\rho_\delta)\nabla\rho_\delta\cdot\nabla\phi_\delta\big)\xi dx dt.
\end{align*}
This computation can be made rigorous (such that $\psi_1$ is an admissible
test function) by using renormalization techniques; see, e.g.,
\cite[Section 10.18]{FeNo09}. The previous equation can be rewritten as
\begin{align}\label{rce.1}
  -\pa_t & f(\rho_\delta) 
	+ \diver\big(f'(\rho_\delta)\na(\rho_\delta\theta_\delta)
  + \delta f'(\rho_\delta)\na\phi_\delta \big) \\
  \nonumber 
  &= \delta f'(\rho_\delta) \phi_\delta 
	+ f''(\rho_\delta)\nabla\rho_\delta\cdot\nabla(\rho_\delta\theta_\delta)
  + \delta f''(\rho_\delta)\nabla\rho_\delta\cdot\nabla\phi_\delta 
	\quad\mbox{in }\mathcal{D}'(\Omega_T).
\end{align}

{\em Step 2: Application of the div-curl lemma.}
We apply the div-curl lemma to the vector fields
$$
  U_\delta = \big( f(\rho_\delta), -f'(\rho_\delta)
	\na(\rho_\delta\theta_\delta) - \delta f'(\rho_\delta)\na\phi_\delta
  \big),\quad V_\delta = \big( g(\theta_\delta) , 0,0,0 \big),
$$
where $f$ is as before and $g\in C^1([0,\infty))\cap L^\infty(0,\infty)$
satisfies $|g'(s)|\le C(1+s)^{-1}$ for $s>0$. 
We know from \eqref{bound.2} that $(\na\sqrt{\rho_\delta\theta_\delta})$
and $(\sqrt{\delta}\na\phi_\delta)$ are bounded in $L^2(\Omega_T)$ and from
Lemma \ref{lem.higher} that $(\sqrt{\rho_\delta\theta_\delta})$ is
bounded in $L^4(\Omega_T)$. Consequently,
$$
  f'(\rho_\delta)\na(\rho_\delta\theta_\delta) 
	+ \delta f'(\rho_\delta)\na\phi_\delta
  = 2f'(\rho_\delta)\sqrt{\rho_\delta\theta_\delta}
	\na\sqrt{\rho_\delta\theta_\delta}+\delta f'(\rho_\delta)\na\phi_\delta
$$
is uniformly bounded in $L^{4/3}(\Omega_T)$. Thus, $(U_\delta)$ is bounded
in $L^{4/3}(\Omega_T)$. 
Because of the properties of $g$, $(V_\delta)$ is trivially bounded in
$L^\infty(\Omega_T)$. 

The left-hand side of \eqref{rce.1} equals $-\diver_{(t,x)}U_\delta$. We wish to
bound the right-hand side of \eqref{rce.1}. For this, we observe that,
thanks to \eqref{bound.4}, the first term $\delta f'(\rho_\delta)\phi_\delta$ is 
uniformly bounded in $L^2(\Omega_T)$. We rewrite the second term as
\begin{equation}\label{aux}
  f''(\rho_\delta)\nabla\rho_\delta\cdot\nabla(\rho_\delta\theta_\delta) 
	= 4\rho_\delta f''(\rho_\delta)\sqrt{\theta_\delta}\nabla\sqrt{\rho_\delta}
	\cdot\nabla\sqrt{\rho_\delta\theta_\delta}.
\end{equation}
Since $\rho_\delta |f''(\rho_\delta)|\le C\rho_\delta/(1+\rho_\delta)^2\le C$
and $(\sqrt{\theta_\delta}\na\sqrt{\rho_\delta})$, 
$(\sqrt{\rho_\delta\theta_\delta})$ are bounded in $L^2(\Omega_T)$ by
\eqref{bound.2}, expression \eqref{aux} is bounded in $L^1(\Omega_T)$.
In order to bound the last term in \eqref{rce.1}, we observe that, by
\eqref{bound.3} and \eqref{bound.4},
$$
  \sqrt{\delta}\na\log\rho_\delta 
	= \sqrt{\delta}\na\phi_\delta + \frac32\sqrt{\delta}\na\log\theta_\delta
$$
is uniformly bounded in $L^2(\Omega_T)$. Then
$$
  \delta f''(\rho_\delta)\na\rho_\delta\cdot\na\phi_\delta
	= f''(\rho_\delta)\rho_\delta(\sqrt{\delta}\na\log\rho_\delta)\cdot
	(\sqrt{\delta}\na\phi_\delta)
$$
is uniformly bounded in $L^1(\Omega_T)$. We infer that the right-hand side of
\eqref{rce.1} and consequently also $-\diver_{(t,x)}U_\delta$
are uniformly bounded in $L^1(\Omega_T)$. By Sobolev's embedding, it follows
that $\diver_{(t,x)}U_\delta$ is relatively compact in $W^{-1,r}(\Omega_T)$
for some $r>1$. 

It follows from the uniform $L^2(\Omega_T)$ bound for $(\na\log\theta_\delta)$
(see \eqref{bound.3}) and $\theta_\delta |g'(\theta_\delta)|\le 
C\theta_\delta/(1+\theta_\delta)\le C$ that
$$
  \operatorname{curl}_{(t,x)}V_\delta
	= g'(\theta_\delta)\begin{pmatrix} 0 & (\na\theta_\delta)^T \\
	\na\theta_\delta & 0 \end{pmatrix}
	= \theta_\delta g'(\theta_\delta)\begin{pmatrix} 0 & (\na\log\theta_\delta)^T \\
	\na\log\theta_\delta & 0 \end{pmatrix}
$$
is uniformly bounded in $L^2(\Omega_T)$. By Sobolev's embedding, this expression is
relatively compact in $W^{-1,r}(\Omega_T;\R^{3\times 3})$ for some $r>1$.

The div-curl lemma \cite[Theorem 10.21]{FeNo09} implies that
$\overline{U_\delta\cdot V_\delta}=\overline{U_\delta}\cdot\overline{V_\delta}$
a.e.\ in $\Omega_T$, which means that
\begin{equation}\label{fg}
  \overline{f(\rho_\delta)g(\theta_\delta)} = \overline{f(\rho_\delta)}\,
	\overline{g(\theta_\delta)}\quad\mbox{a.e. in }\Omega_T
\end{equation}
for all $f\in C^2([0,\infty))\cap L^\infty(0,\infty)$ and $g\in C^1([0,\infty))
\cap L^\infty(0,\infty)$ satisfying $|f'(s)|\le C(1+s)^{-1}$,
$|f''(s)|\le C(1+s)^{-2}$, and $|g'(s)|\le C(1+s)^{-1}$ for $s>0$.

{\em Step 3: Proof of $\overline{\rho_\delta\theta_\delta}=\rho\theta$.}
We wish to relax the assumptions on the functions $f$ and $g$. To this end,
we introduce the truncation function $T_1\in C^2([0,\infty))$ by
$T_1(s)=s$ for $0\le s<1$, $T_1(s)=2$ for $s>3$, and $T_1$ is nondecreasing
and concave in $[0,\infty)$. Then we define $T_k(s)=k T_1(s/k)$ for $s>0$ and
$k\in\N$. It is possible to choose $f=T_k$ in \eqref{fg}. Together with
Fatou's lemma and the boundedness of $g$, we infer that
\begin{align*}
  \big\|\overline{\rho_\delta g(\theta_\delta)} 
	- \overline{\rho_\delta}~\overline{g(\theta_\delta)} \big\|_{L^1(\Omega_T)} 
	&= \big\|\overline{(\rho_\delta - T_k(\rho_\delta)) g(\theta_\delta)} 
	- \overline{(\rho_\delta - T_k(\rho_\delta))}~\overline{g(\theta_\delta)} 
	\big\|_{L^1(\Omega_T)}\\
  &\leq C\sup_{0<\delta<1}\int_{\Omega_T}|T_k(\rho_\delta)-\rho_\delta| dx dt.
\end{align*}
Furthermore, we deduce from \eqref{bound.1} that
$$
  \int_{\Omega_T}|T_k(\rho_\delta)-\rho_\delta| dx dt
	\leq C\int_{\{\rho_\delta\geq k\}}\rho_\delta dx dt
	\leq \frac{C}{\log k}\int_{\{\rho_\delta\geq k\}}\rho_\delta\log\rho_\delta dx dt
	\leq \frac{C}{\log k},
$$
such that we obtain for any $k\ge 2$,
$$
  \big\|\overline{\rho_\delta g(\theta_\delta)} 
	- \overline{\rho_\delta}~\overline{g(\theta_\delta)} \big\|_{L^1(\Omega_T)} 
	\le \frac{C}{\log k}.
$$
Then the limit $k\to\infty$ implies that
\begin{equation}\label{rho.g}
  \overline{\rho_\delta g(\theta_\delta)} = \rho\,\overline{g(\theta_\delta)}
	\quad\mbox{a.e. in }\Omega_T
\end{equation}
for any $g\in C^1([0,\infty))\cap L^\infty(0,\infty)$ satisfying  
$|g'(s)|\le C(1+s)^{-1}$ for $s>0$. We choose $g=T_k$ which leads to
\begin{equation}\label{rt.1}
  \overline{\rho_\delta \theta_\delta} - \rho\theta
	= \overline{\rho_\delta(\theta_\delta-T_k(\theta_\delta))}
	- \rho(\theta-\overline{T_k(\theta_\delta)}).
\end{equation}

We claim that both terms on the right-hand side converge to zero as $k\to\infty$.
Indeed, it follows from Fatou's lemma and the $L^1(\Omega_T)$ bound for
$(\rho_\delta\theta_\delta^2)$ from Lemma \ref{lem.higher} that
\begin{align*}
  \bigg\|\frac{\overline{\rho_\delta(\theta_\delta-T_k(\theta_\delta))}}{1+\rho}
	\bigg\|_{L^1(\Omega_T)}
	&\le \sup_{0<\delta<1}\int_{\Omega_T}\rho_\delta 
	|\theta_\delta - T_k(\theta_\delta)|dx dt
	\leq C\sup_{0<\delta<1}\int_{\{\theta_\delta>k\}}\rho_\delta\theta_\delta dx dt \\
  &\leq \frac{C}{k}\sup_{0<\delta<1}\int_{\{\theta_\delta>k\}}
	\rho_\delta\theta_\delta^2 dx dt\leq \frac{C}{k},
\end{align*}
while we deduce from Fatou's lemma and the $L^2(\Omega_T)$ bound for
$(\theta_\delta)$, again from Lemma \ref{lem.higher}, that
\begin{align*}
  \bigg\|\frac{\rho(\theta - \overline{T_k(\theta_\delta)})}{1+\rho}
	\bigg\|_{L^1(\Omega_T)} 
	&\leq \sup_{0<\delta<1}\int_{\Omega_T}|\theta_\delta - T_k(\theta_\delta)|dx dt
	\leq C\sup_{0<\delta<1}\int_{\{\theta_\delta>k\}}\theta_\delta dx dt \\
  &\leq \frac{C}{k}\sup_{0<\delta<1}\int_{\{\theta_\delta>k\}}\theta_\delta^2 dx dt
	\leq \frac{C}{k}.
\end{align*}
We infer from \eqref{rt.1} that for any $k\ge 1$,
$$
  \bigg\|\frac{\overline{\rho_\delta\theta_\delta}-\rho\theta}{1+\rho}
	\bigg\|_{L^1(\Omega_T)} \le \frac{C}{k},
$$
which implies, in the limit $k\to\infty$, that
\begin{equation}\label{rho.id}
  \overline{\rho_\delta\theta_\delta} = \rho\theta\quad\mbox{a.e. in }\Omega_T.
\end{equation}

{\em Step 4: Pointwise convergence of $(\theta_\delta)$.}
We prove via the Aubin--Lions lemma that $E_\delta=\theta_\delta+\frac32\rho_\delta
\theta_\delta$ is strongly convergent. We know from Lemma \ref{lem.grad} that
$(E_\delta)$ is bounded in $L^1(0,T;W^{1,1}(\Omega))$. For the time derivative
of $E_\delta$, we estimate \eqref{ap8} for $\psi_2\in C_0^\infty(\Omega_T)$:
\begin{align*}
  \bigg|\int_0^T\langle\pa_t E_\delta,\psi_2\rangle dt\bigg|
	&\le \bigg|\int_0^T\int_\Omega\bigg(\theta_\delta 
	+ \frac{5}{2}\rho_\delta\theta_\delta^2 + \frac{\delta}{3}\theta_\delta^3\bigg)
	\Delta\psi_2 dxdt\bigg| \\
	&\phantom{xx}{}+ \bigg|\delta\int_0^T\int_\Omega\theta_\delta^{-N}
	\log(\theta_\delta)\psi_2 dxdt\bigg| \\
  &\le C\big(\|\theta_\delta\|_{L^2(\Omega_T)}
	+ \|\rho_\delta\theta_\delta^2\|_{L^{3/2}(\Omega_T)}
	+ \delta\|\theta_\delta^3\|_{L^{4/3}(\Omega_T)}\big)
	\|\Delta\psi_2\|_{L^4(\Omega_T)} \\
	&\phantom{xx}{}+ C\big(1+\delta\|\theta_\delta^{-N}\log\theta_\delta
	\|_{L^{6/5}(\Omega_T)}\big)\|\psi_2\|_{L^6(\Omega_T)}.
\end{align*}
Taking into account estimate \eqref{est.L65} and again using Lemma \ref{lem.higher}, 
we infer that
$$
  \|\pa_t E_\delta\|_{L^{6/5}(0,T;W^{2,4}(\Omega)')} \le C.
$$

We apply the Aubin--Lions lemma to $(E_\delta)$ to obtain the existence
of a subsequence which is not relabeled such that, as $\delta\to 0$,
$(E_\delta)$ converges strongly in $L^\eta(\Omega_T)$ for $\eta<2$. 
Since $(1+\theta_\delta)^{-1}$ converges weakly in $L^\eta(\Omega_T)$ for
any $\eta<\infty$, we find that
\begin{equation}\label{over}
  \overline{\bigg(\theta_\delta + \frac32\rho_\delta\theta_\delta\bigg)
	(1+\theta_\delta)^{-1}} = \overline{\bigg(\theta_\delta 
	+ \frac32\rho_\delta\theta_\delta\bigg)}\,\overline{(1+\theta_\delta)^{-1}}
	\quad\mbox{a.e. in }\Omega_T.
\end{equation}
We choose $g(s)=s(1+s)^{-1}$ in \eqref{rho.g} and recall \eqref{rho.id}:
$$
  \overline{\rho_\delta\theta_\delta(1+\theta_\delta)^{-1}}
	= \rho\,\overline{\theta_\delta(1+\theta_\delta)^{-1}}, \quad
	\overline{\rho_\delta\theta_\delta} = \rho\theta\quad\mbox{a.e. in }\Omega_T.
$$
Using these expressions, we deduce from \eqref{over} that
\begin{align*}
  \bigg(1+\frac32\rho\bigg)&\overline{\theta_\delta(1+\theta_\delta)^{-1}}
	= \overline{\theta_\delta(1+\theta_\delta)^{-1}
	+\frac32\rho_\delta\theta_\delta(1+\theta_\delta)^{-1}} \\
	&= \overline{\bigg(\theta_\delta 
	+ \frac32\rho_\delta\theta_\delta\bigg)}\;\overline{(1+\theta_\delta)^{-1}} 
	= \overline{\theta_\delta}\;\overline{(1+\theta_\delta)^{-1}}
	+ \frac32\overline{\rho_\delta\theta_\delta}\;\overline{(1+\theta_\delta)^{-1}} \\
	&= \bigg(1+\frac32\rho\bigg)\theta\overline{(1+\theta_\delta)^{-1}}
	\quad\mbox{a.e. in }\Omega_T.
\end{align*}
This means that
$$
  \overline{\theta_\theta(1+\theta_\delta)^{-1}}
	= \theta\overline{(1+\theta_\delta)^{-1}}\quad\mbox{a.e. in }\Omega_T.
$$
We apply \cite[Theorem 10.19]{FeNo09} to the strictly decreasing 
function $s\mapsto(1+s)^{-1}$ for $s\ge 0$ to conclude that 
$$
  \overline{(1+\theta_\delta)^{-1}} = (1+\theta)^{-1}\quad\mbox{a.e. in }\Omega_T.
$$
The strict convexity of $s\mapsto(1+s)^{-1}$ then implies, by
\cite[Theorem 10.20]{FeNo09}, that $\theta_\delta\to\theta$ a.e.\ in $\Omega_T$. 
We deduce from the $L^2(\Omega_T)$ bound for $(\theta_\delta)$ from Lemma 
\ref{lem.higher} that this convergence is in fact strong in $L^1(\Omega_T)$.

{\em Step 5: Limit $\delta\to 0$ in equations \eqref{ap7}--\eqref{ap8}.}
We know from \eqref{rhoth} that $(\na(\rho_\delta\theta_\delta))$ is bounded
in $L^{4/3}(\Omega_T)$. Thus, up to a subsequence,
$\na(\rho_\delta\theta_\delta)\rightharpoonup\zeta_1$ weakly in $L^{4/3}(\Omega_T)$
for some $\zeta_1\in L^{4/3}(\Omega_T)$.
Since $\rho_\delta\theta_\delta\rightharpoonup\rho\theta$ weakly in
$L^1(\Omega_T)$, by \eqref{rho.id}, we infer that $\zeta_1=\na(\rho\theta)$, i.e.
\begin{equation}\label{conv1}
  \na(\rho_\delta\theta_\delta)\rightharpoonup\na(\rho\theta)
	\quad\mbox{weakly in }L^{4/3}(\Omega_T).
\end{equation}

We know from Lemma \ref{lem.higher} that $(\rho_\delta\theta_\delta^2)$ is
bounded in $L^{3/2}(\Omega_T)$, so that up to a subsequence,
$\rho_\delta\theta_\delta^2\to\zeta_2$ weakly in $L^{3/2}(\Omega_T)$.
We deduce from the strong convergence of $(\theta_\delta)$ and the boundedness
of $s\mapsto(1+s^2)^{-1}$ that $(1+\theta_\delta^2)^{-1}\to(1+\theta^2)^{-1}$
strongly in $L^\eta(\Omega_T)$ for any $\eta<\infty$. Therefore,
$$
  \frac{\rho_\delta\theta_\delta^2}{1+\theta_\delta^2}
	\rightharpoonup\frac{\zeta_2}{1+\theta^2}\quad\mbox{weakly in }L^1(\Omega_T).
$$
An application of \eqref{rho.g} with $g(s)=s^2(1+s^2)^{-1}$ together with the
strong convergence of $(\theta_\delta)$ leads to
$$
  \frac{\rho_\delta\theta_\delta^2}{1+\theta_\delta^2}
	\rightharpoonup\frac{\rho\theta^2}{1+\theta^2}\quad\mbox{weakly in }L^1(\Omega_T).
$$
Hence, $\zeta_2=\rho\theta^2$ a.e.\ in $\Omega_T$ and
\begin{equation}\label{conv2}
  \rho_\delta\theta_\delta^2\rightharpoonup \rho\theta^2\quad
	\mbox{weakly in }L^{3/2}(\Omega_T).
\end{equation}
Furthermore, it follows from \eqref{bound.4}, Lemma \ref{lem.higher}, and
\eqref{est.L65} that
\begin{align}
  \delta\phi_\delta\to 0 &\quad\mbox{strongly in }L^2(0,T;H^1(\Omega)), \nonumber \\
	\delta\theta_\delta^3\to 0 &\quad\mbox{strongly in }L^{4/3}(\Omega_T), 
	\label{conv3} \\
  \delta\theta_\delta^{-N}\log\theta_\delta\to 0
	&\quad\mbox{strongly in }L^{6/5}(\Omega_T). \nonumber
\end{align}
For any $\psi_1\in L^4(0,T;W^{1,4}(\Omega))$, we have
\begin{align*}
  \bigg|\int_0^T\langle\pa_t\rho_\delta,\psi_1\rangle dt\bigg|
	&\le \frac32\|\na(\rho_\delta\theta_\delta)\|_{L^{4/3}(\Omega_T)}
	\|\na\psi_1\|_{L^4(\Omega_T)} \\
	&\phantom{xx}{}+ \delta\|\phi_\delta\|_{L^2(0,T;H^1(\Omega))}
	\|\psi_1\|_{L^2(0,T;H^1(\Omega))}\le C.
\end{align*}
Hence, up to subsequences,
\begin{equation}\label{conv4}
\begin{aligned}
  \pa_t\rho_\delta\rightharpoonup\pa_t\rho &\quad\mbox{weakly in }
	L^{4/3}(0,T;W^{1,4}(\Omega)'), \\
  \pa_t E_\delta\rightharpoonup\pa_t E &\quad\mbox{weakly in }
	L^{6/5}(0,T;W^{2,4}(\Omega)').
\end{aligned}
\end{equation}
We deduce from the bound for $(\log\theta_\delta)$ in $L^\infty(0,T;L^1(\Omega))$
that $\theta>0$ a.e.\ in $\Omega_T$. 

We claim that $(\rho_\delta)$ also converges strongly. Indeed, the a.e.\ 
convergence of $(E_\delta)$ and $(\theta_\delta)$ imply that
$\rho_\delta=\frac23(E_\delta/\theta_\delta-1)\to \rho$ a.e.\ in $\Omega_T$.
The $L^\infty(0,T;L^1(\Omega))$ bound for $(\rho_\delta\log\rho_\delta)$
from \eqref{bound.1} shows that $(\rho_\delta)$ is equi-integrable, and
together with its a.e.\ convergence, we conclude from the de la Vall\'ee--Poussin
theorem \cite[Chap.~8, Sect.~1.7, Corollary 1.3]{EkTe76} that
$$
  \rho_\delta\to\rho\quad\mbox{strongly in }L^1(\Omega_T).
$$
The positivity of $\rho_\delta$ implies that $\rho\ge 0$ a.e.\ in $\Omega_T$.
Note, however, that we cannot conclude that $\rho>0$ a.e., since the control
on $\phi_\delta$ is now lost.

Convergences \eqref{conv1}--\eqref{conv4} allow us to perform the limit
$\delta\to 0$ in \eqref{ap7}--\eqref{ap8} showing that $(\rho,\theta)$
solves \eqref{weak.1}--\eqref{weak.2}. Theorem \ref{thm.ex} is proved.


\end{document}